\documentclass[11pt]{amsart}
\usepackage[margin=1.5in]{geometry}
\usepackage{amsmath,amssymb,amsthm,mathrsfs,bm}
\usepackage{graphicx}
\usepackage{xcolor}
\usepackage{tikz}
\usepackage{float}
\usepackage{algorithm}
\usepackage{placeins}
\usepackage{subcaption}
\usepackage{hyperref}
\usepackage{cleveref}
\usepackage{bbm}
\usepackage{comment}
\usepackage[
    backend=biber,
    style=numeric,
    doi=true,
    url=true,
    isbn=false
]{biblatex}

\usetikzlibrary{positioning,arrows.meta}

\title[ Data-Driven Stochastic Control of Delayed air-conditioning Systems ]{Data-Driven Stochastic Optimal Control of air-conditioning Systems with Delayed Activation}

\author{Christian Bayer}
\address[Christian Bayer]{Weierstraß Institute for Applied Analysis and Stochastics, Berlin, Germany}
\email{christian.bayer@wias-berlin.de}

\author{Raúl Tempone}
\address[Raúl Tempone]{Computer, Electrical and Mathematical Sciences \& Engineering Division (CEMSE), King Abdullah University of Science and Technology (KAUST), Thuwal, Saudi Arabia}
\email{raul.tempone@kaust.edu.sa, tempone@uq.rwth-aachen.de}

\author{Leon Wilkosz$^\ast$}
\address[Leon Wilkosz]{Computer, Electrical and Mathematical Sciences \& Engineering Division (CEMSE), King Abdullah University of Science and Technology (KAUST), Thuwal, Saudi Arabia} 
\email{leon.wilkosz@kaust.edu.sa}
\thanks{Corresponding author$^\ast$}

\date{\today}

\usepackage{aliascnt}
\usepackage{hyperref}
\usepackage{cleveref}

\newtheorem{theorem}{Theorem}

\newaliascnt{lemma}{theorem}

\aliascntresetthe{lemma}

\newaliascnt{proposition}{theorem}
\newtheorem{proposition}[proposition]{Proposition}
\aliascntresetthe{proposition}

\newaliascnt{remark}{theorem}

\aliascntresetthe{remark}

\newcommand{\R}{\mathbb R}

\DeclareMathOperator*{\argmin}{arg\,min}
\definecolor{darkred}{rgb}{0.55,0,0}

\begin{document}

\begin{abstract}
We consider a stochastic optimal control problem arising in the 
operation of industrial air-conditioning systems with delayed 
activation effects. The delayed transition between inactive and 
operational regimes induces a non-Markovian structure in the 
controlled dynamics. We derive a 
regression-based discrete dynamic programming algorithm on nested 
state-control grids. We give a probabilistic proof of convergence of the idealized
time-discrete dynamic programming problem to the continuous-time value function. 
Numerical experiments based on calibrated multidimensional 
forecast-tracking Ornstein--Uhlenbeck models illustrate the 
effectiveness of the proposed methodology.
\end{abstract}

\keywords{Stochastic optimal control, delayed impulse control, dynamic programming, weak convergence methods, regression Monte Carlo, approximate dynamic programming, air-conditioning systems}

\subjclass[2020]{93E20, 49L20, 65C05, 60H35, 93E24}

\maketitle

\section{Introduction}

The operation of large-scale air-conditioning systems in industrial and scientific environments requires the reliable maintenance of strict environmental conditions over long time horizons. Typical examples include airport infrastructures, semiconductor manufacturing facilities, and pharmaceutical laboratories in which deviations from prescribed temperature and humidity regimes may result in substantial operational losses or production failure \cite{ni_semiconductor_hvac, yin_cleanroom_energy}. In such systems, heating, ventilation, and air-conditioning infrastructure constitutes one of the dominant sources of operational energy consumption, often accounting for a substantial fraction of the total facility energy demand \cite{perez_lombard_building_energy, zeng_hvac_review}. Simultaneously, failure to maintain the required environmental conditions may compromise product quality, interrupt sensitive production processes, or damage critical infrastructure, thereby inducing severe economic consequences \cite{yin_cleanroom_energy}. Consequently, the operation of industrial air-conditioning systems naturally gives rise to a sequential control problem balancing the energy costs associated with active cooling against the economic penalties caused by violations of prescribed environmental conditions.

A central difficulty in the mathematical modeling of such systems is the 
delayed response of the cooling infrastructure. Existing optimization and 
stochastic control approaches for HVAC systems typically assume that 
control decisions affect the environmental dynamics instantaneously, 
while uncertainty enters only through exogenous disturbances such as 
weather conditions or occupancy fluctuations \cite{yang_hvac_soc, kumar_hvac_mpc}. 
In practice, however, large-scale cooling systems frequently exhibit 
substantial activation delays. After being switched on, the air 
conditioning system does not immediately enter the operational regime, but 
instead undergoes a warm-up phase during which energy costs are already 
incurred while the environmental conditions may still violate admissible 
operating thresholds. Consequently, the resulting control problem exhibits 
an intrinsic delay structure. In particular, the future operational state of 
the system does not only depend on the present switching state, but additionally 
on the elapsed time since the most recent activation event. Delayed reaction and 
execution lags have been studied in stochastic impulse-control settings. 
The contribution here is an HVAC-oriented stochastic switching formulation 
that combines an explicit warm-up clock, weak compactness-based analysis, 
regression-based dynamic programming on the augmented state, and 
calibration to forecast-tracking environmental data.
\cite{ni_semiconductor_hvac, yin_cleanroom_energy}. 
As a result, the air-conditioning system operates in three 
regimes:

\begin{itemize}
\item Off,
\item Warm-up,
\item Operational,
\end{itemize}
and we consider two types of costs:
\begin{itemize}
\item operational cost, from now on called $C_1$,
\item failure to provide cooling/dehumidifying service when required, from now on called $C_2$.
\end{itemize}
The system requires a deterministic warm-up period $\tau$ before the admissible environmental regime is reached. We assume conservatively that whenever a switch to Off occurs, the system needs to go through the full warm-up time to enter the operational regime again.
We illustrate this in \Cref{fig:operational_states}. 
\begin{figure}[ht]
\centering
\begin{tikzpicture}[node distance=2.5cm, >=Stealth, auto, thick, scale=1]
    \node[draw, rounded corners, fill=red!20, minimum width=2.5cm, minimum height=0.8cm] (off)
        {\parbox{2.3cm}{Off\\ \textbf{Cost:} \\$C_2$ if service required}};
    \node[draw, rounded corners, fill=orange!30, minimum width=2.5cm, minimum height=0.8cm, right=of off] (on_evolve)
        {\parbox{2.3cm}{On, evolving to regime\\ \textbf{Cost:} \\$C_1$, $C_2$ if service required}};
    \node[draw, rounded corners, fill=green!30, minimum width=2.5cm, minimum height=0.8cm, right=of on_evolve] (on_regime)
        {\parbox{2.3cm}{On, operational\\ \textbf{Cost:} \\$C_1$}};
    \draw[->] (off) -- (on_evolve) node[midway, above] {Switch On};
    \draw[->] (on_evolve) -- (on_regime) node[midway, above] {Warm-up $\tau$};
    \draw[->, bend left=20] (on_evolve) to node[midway, below] {Switch Off} (off);
    \draw[->, bend left=40] (on_regime) to node[midway, below] {Switch Off} (off);
\end{tikzpicture}
\caption{Delayed switching structure of the air-conditioning system. After activation the system enters a warm-up phase during which running costs are incurred, while failure costs may still occur if service is required.}
\label{fig:operational_states}
\end{figure}
Due to unavailability of forecasts for the internal environmental parameters 
of buildings, and the complex dependence on architectural design and materials, 
we use the outside temperature at the building's location as a proxy for operational 
risk.
Thus, as a major modelling assumption, the process of environmental parameters 
$X_t \in \R^d, \; 0 \leq t \leq T$ does not depend on the decisions of the controller.
Let us assume for now that we are given a trajectory of $X$. We assume we are given
a set of infeasible states
\[
\mathcal{F} = \{x \in \R^d \mid  \text{service is required at x} \}.
\]
The controls $\alpha_t, \; 0 \leq t \leq T$ take the values $0$ (Off), and $1$ (On). Furthermore, for a given control
 path $\alpha$ we define a clock variable
\[
\zeta_t = t - \sup\{s \leq t \mid \alpha_s = 0\}
\]
measuring the time since the last Off switch. The cost associated with a given state/control pair trajectory is thus
\begin{align}\label{cost_rough}
    \begin{split}
    C(\alpha) = & \; C_1 \int_0^T \alpha_t dt + C_2 \int_0^T (1 - \alpha_t) \mathbbm{1}_{\mathcal{F}}(X_t) d t \\
    & \; + C_2 \int_0^T \alpha_t \mathbbm{1}_{\mathcal{F}}(X_t) \mathbbm{1}_{[0,\tau)}(\zeta_t) d t,
    \end{split}
\end{align}
which makes clear that an optimal closed-loop (i.e. $\mathcal{F}^{X}$-adapted) control $\alpha$ is not of feedback type (i.e. Markovian), as it must depend on path information through the clock $\zeta$.

The present work develops a mathematically rigorous framework for this class of delayed stochastic optimal switching problems. 
Sufficiently regular behaviour is enforced by adding a jump penalty to \Cref{cost_rough}.

On the numerical side, we construct a hybrid grid and regression-based dynamic programming algorithm.
The delay structure is incorporated through a clock variable, leading to a 
state representation suitable for discrete dynamic programming. The idea is that under sufficiently small time
steps, this algorithm produces $\varepsilon$-optimal feedback controls within the framework of open-loop control. The treatment is related to several distinct strands of the stochastic control literature. 
Impulse control problems with delayed reaction have been studied, for instance, by reducing the delayed 
intervention problem to a sequence of no-delay optimal stopping problems \cite{oksendal_sulem_delayed_reaction}. 
Closely related finite-horizon impulse control problems with decision lag and execution delay have also been analyzed 
through time-discontinuous optimal stopping formulations and discrete-time approximations \cite{palczewski_stettner_delay}. 
Separately, regression Monte Carlo methods have been developed for stochastic impulse control, where continuation and 
intervention functions are learned from simulated trajectories \cite{ludkovski_regression_impulse}. 
The weak-convergence argument used below to pass from the discrete dynamic programming problem to the continuous-time 
value function follows the general philosophy of Markov-chain approximation methods for continuous-time 
stochastic control \cite{kushner_dupuis_2001}.

The convergence properties of regression-based dynamic programming methods for discrete-time stochastic control problems 
with continuous state spaces have already been studied rigorously in \cite{regression}. For this reason, the present work 
does not further analyze the statistical convergence of the regression procedure itself, but instead focuses on the 
complementary question of convergence of the underlying discretized delayed control problem to the original continuous-time value function.

The stochastic dynamics used in the numerical examples are calibrated on forecast and 
observation data obtained from OpenWeather \cite{openweather} through a 
multidimensional Ornstein--Uhlenbeck model with correlated noise structure. 
We adopt the idea of \cite{renzo} to regard the deviation of the environmental 
parameters from their forecast as a zero-mean stochastic process. The real-data 
simulations are implemented within a rolling horizon framework over sufficiently 
short decision intervals, so that the progressive deterioration of forecast quality over 
longer horizons can be neglected within each optimization window. The use of 
Ornstein--Uhlenbeck dynamics is consistent with established stochastic models for 
environmental variables such as temperature processes \cite{benth_temperature_2005}. 
More general bounded diffusion models, such as Wright--Fisher dynamics, are equally 
compatible with the theoretical framework developed in the present work and may 
be particularly natural when modeling bounded quantities such as relative humidity. 
In fact, \cite{renzo} also develops a rigorous calibration methodology for such models.
However, application of the latter to our case would unnecessarily enlarge the numerical
study.

The main contributions of this work can be summarized as follows. 
First, we formulate the delayed air-conditioning problem as a weak 
stochastic optimal control problem. Second, we develop a rigorous 
foundation for the resulting delayed stochastic control problem. 
We introduce a regression-based dynamic programming algorithm 
adapted to the delayed switching structure and establish optimality of 
the associated idealized discrete dynamic programming formulation within the 
class of open-loop controls. Most importantly, we prove convergence of the idealized discrete dynamic 
programming value functions to the value function of the original 
continuous-time stochastic control problem. Finally, we combine the 
method with a calibrated forecast-tracking model and demonstrate 
its practical performance in a rolling-horizon study based on real 
environmental data, providing a complete pipeline from forecast 
uncertainty quantification to delayed 
stochastic control and operational cost comparison.

The structure of the paper is as follows. In Section 2 we formulate the 
delayed stochastic control problem. Section 3 develops the regression-based 
dynamic programming approximation and establishes optimality of the 
corresponding discrete dynamic programming principle.
Section 4 establishes convergence of the discrete approximation scheme. 
Finally, Section 5 presents calibration procedures and numerical examples 
for air-conditioning systems with delayed operational regimes.

\section{Weak Formulation of the Delayed SOC Problem}

This section establishes the precise formulation of the problem,
but also introduces the assumptions on the herein defined objects that shall
hold throughout the entire paper. We fix from now on a delay
variable \(\tau \in (0,\infty)\), which is the warm-up time of the Air
Conditioning device. Let
\[
b_i,\sigma_{ij}:[0,\infty)\times\mathbb R^d\to\mathbb R
\]
be Borel-measurable functions. We define the drift vector
\[
b(t,x)
=
(b_1(t,x),\ldots,b_d(t,x)),
\]
and the dispersion matrix
\[
\sigma(t,x)
=
(\sigma_{ij}(t,x))_{1\leq i\leq d,\ 1\leq j\leq r}.
\]
Throughout the paper we assume that there exists a constant \(L>0\) such that
\[
|b(t,x)-b(t,y)|
+
\|\sigma(t,x)-\sigma(t,y)\|
\leq
L|x-y|,
\]
and
\[
|b(t,x)|
+
\|\sigma(t,x)\|
\leq
L(1+|x|),
\]
for all \(t\in[0,T]\) and \(x,y\in\mathbb R^d\). In particular, the
corresponding stochastic differential equation admits a unique strong
solution.

The control problem is formulated in weak form. Fix
\[
(t,x,z)\in[0,T]\times\mathbb R^d\times[0,\infty).
\]
We minimize over systems of processes
\[
\Xi
=
(W,X,\alpha,\zeta)
\]
satisfying the following conditions:

\begin{enumerate}

\item
\(W\) is an \(r\)-dimensional $\mathcal{F}$-Brownian motion, where
$\mathcal{F}$ is the filtration generated by the full set of processes
\((W,X,\alpha,\zeta)\).

\item
\(X\) is a solution of
\begin{align*}
X_s
=
x
+
\int_t^s b(r,X_r)\,dr
+
\int_t^s \sigma(r,X_r)\,dW_r,
\qquad
t\leq s\leq T.
\end{align*}

\item
The control process
\[
\alpha:[t,T]\to\{0,1\}
\]
is left continuous with right limits.

\item
For the warm-up clock $\zeta$, we impose the initial compatibility condition
\[
\alpha_t = \mathbf{1}_{(0,\infty)}(z),
\qquad
\zeta_t = z.
\]
We write
\[
\rho_s
=
\sup\left(
\{t-z\}
\cup
\{r\in[t,s]:\alpha_r=0\}
\right),
\]
and define the clock process by
\[
\zeta_s
=
\alpha_s\bigl(s-\rho_s\bigr),
\qquad s\in[t,T].
\]
The clock therefore measures the elapsed uninterrupted time in the On regime,
initialized at $z$. The variable $\rho_s$ determines whether the system has
been switched Off since the initial time and, accordingly, whether the clock
continues from its initial value $z$ or restarts from zero. An immediate
intervention at $t$ is represented by the right limit $\alpha_{t+}$ and
incurs the switching cost $c|\alpha_{t+}-\alpha_t|$.

\end{enumerate}
We denote the collection of systems satisfying these conditions by
\[
\Lambda_{t,x,z}.
\]
Moreover, \(\alpha_s\) takes values in \(\{0,1\}\) and is assumed to be left
continuous with right limits.
Let
\[
h:
[0,T]\times\mathbb R^d\times\mathbb R\times\{0,1\}
\to
\mathbb R
\]
be an indicator cost of the type
\begin{align*}
h(t,x,z,\alpha)
=
&\;
C_1\alpha
+
C_2(1-\alpha)\mathbbm{1}_{\mathcal F}(x)
+
C_2\alpha\mathbbm{1}_{\mathcal F}(x)
\mathbbm{1}_{[0,\tau)}(z),
\end{align*}
where \(\mathcal F\) is an open set with Lipschitz boundary. We assume
throughout this work that the process \(X\) has zero occupation time at the
boundary of \(\mathcal F\).
For \(\Xi\in\Lambda_{t,x,z}\), we define
\begin{align}
J(t,x,z,\Xi)
=
\mathbb E^{\Xi}
\bigg[
\int_t^T
h(s,X_s,\zeta_s,\alpha_s)\,ds
+
c
\sum_{s\in[t,T)}
|\alpha_s-\alpha_{s+}|
\bigg].
\end{align}
Although the running cost depends directly on the control \(\alpha\), adding it
as a state variable is redundant. By left continuity, it can directly be
inferred from the clock as
\[
\alpha_t
=
\mathbbm{1}_{(0,\infty)}(\zeta_t).
\]
The value function is given by
\begin{align}
V(t,x,z)
=
\inf_{\Xi\in\Lambda_{t,x,z}}
J(t,x,z,\Xi).
\end{align}

\section{Regression-Based Discrete Dynamic Programming Algorithm}

\subsection{Idealized discrete-time control problem}

In order to construct a numerical approximation of the continuous-time
switching problem, we first introduce an idealized discrete-time stochastic
control problem. The regression algorithm presented afterwards should be
understood as a numerical approximation of the corresponding discrete dynamic
programming recursion.
Let
\[
D
=
\{0=t_0<t_1<\dots<t_K=T\}
\]
be the control discretization grid. We consider the subset
\[
\mathcal A^D
\]
of admissible controls which are constant between consecutive points of
\(D\), so that control decisions may only be taken at the times
\(t_0,\ldots,t_{K-1}\). More precisely, for
\(\alpha\in\mathcal A^D\),
\[
\alpha_s
=
\alpha_{t_{k + 1}},
\qquad
s\in(t_k,t_{k+1}],
\qquad
0\leq k\leq K-1.
\]
The state process \(X\) is not discretized and is given by the solution of the
original state equation.
For \(\alpha\in\mathcal A^D\), we define the corresponding discrete-control
cost by
\begin{align*}
J(t_k,x,z;\alpha)
=
\mathbb E
\Bigg[
&
\int_{t_k}^T
h(s,X_s,\zeta_s,\alpha_s)\,ds
\\
&\quad
+
c\sum_{\ell=k}^{K-1}
\left|
\alpha_{t_{\ell + 1}}
-
\mathbbm{1}_{(0,\infty)}(\zeta_{t_\ell})
\right|
\;\Bigg|\;
(X_{t_k},\zeta_{t_k})=(x,z)
\Bigg].
\end{align*}
The corresponding value function is given by
\[
V_{t_k}(x,z)
=
\inf_{\alpha\in\mathcal A^D}
J(t_k,x,z;\alpha).
\]
The terminal condition is prescribed by
\[
V_{t_K}(x,z)=0.
\]
We now derive the corresponding dynamic programming recursion. Assume that the
value function
\[
V_{t_{k+1}}
\]
has already been determined. For every immediate control decision
\[
\hat\alpha\in\{0,1\},
\]
the associated continuation function is defined by
\begin{align*}
C_{t_k}^{\hat\alpha}(x,z)
=
\mathbb E
\Bigg[
&
\int_{t_k}^{t_{k+1}}
h
\bigl(
s,
X_s,
\hat\alpha(z+s-t_k),
\hat\alpha
\bigr)\,ds
+
c
\left|
\hat\alpha
-
\mathbbm{1}_{(0,\infty)}(z)
\right|
\\
&\qquad
+
V_{t_{k+1}}
\left(
X_{t_{k+1}},
(z+t_{k+1}-t_k)\hat\alpha
\right)
\;\Bigg|\;
X_{t_k}=x
\Bigg].
\end{align*}
Here the clock evolution over the interval
\((t_k,t_{k+1}]\) is deterministic once the immediate control decision
\(\hat\alpha\) and the initial clock value \(z\) are fixed.

The discrete-time dynamic programming principle is therefore given recursively
by
\begin{align*}
V_{t_k}(x,z)
=
\min_{\alpha\in\{0,1\}}
C_{t_k}^{\alpha}(x,z),
\end{align*}
with associated optimal feedback control
\begin{align*}
\alpha_{t_k}^*(x,z)
=
\argmin_{\alpha\in\{0,1\}}
C_{t_k}^{\alpha}(x,z).
\end{align*}
In principle, this recursion completely characterizes the discretized optimal
control problem. However, exact evaluation of the conditional expectations
appearing in the continuation operators is infeasible. To overcome this
difficulty, we build upon the regression-based approximate dynamic programming
approach introduced by Longstaff and Schwartz in the seminal work
\cite{longstaff}. More precisely, we use Monte Carlo simulation and regression
to approximate the conditional continuation values associated with the
stochastic state process \(X\). The clock variable \(\zeta\), on the other hand,
evolves deterministically once the control is fixed. Its attainable values are
therefore induced directly by the control grid and can be tracked explicitly
during the backward recursion. In particular, this avoids the need to introduce
randomized controls in the numerical procedure.
The complete procedure is summarized in \Cref{algo}. We note that the clock
process can be capped at \(\tau\) without any effect. We make use of this to
limit the number of regressions in the numerical procedure in \Cref{algo}, but
leave the clock process uncapped in the remaining theoretical discussion for
ease of notation. The resulting dynamic programming recursion can be solved in
a single backward pass and avoids the need for policy iteration procedures such
as Howard's algorithm \cite{howard}. In the implementation considered below,
the continuation functions are approximated through polynomial regression,
where the polynomial basis is chosen sufficiently rich to capture the geometry
of the decision boundaries while maintaining numerical stability of the
backward recursion. We emphasize that polynomials are merely a choice of
convenience.
\begin{algorithm}
\caption{Longstaff--Schwartz regression algorithm}
\label{algo}
\begin{enumerate}

\item Choose a control grid
\[
D
=
\{0=t_0<\dots<t_K=T\}.
\]
The grid of clock states is induced by \(D\): at each time \(t_k\), only
clock values that can be reached by the deterministic clock dynamics along
the control grid are considered. Denote the corresponding set of attainable
clock values by \(D_z^k\).

Choose a finite-dimensional approximation space
\[
\Phi=\operatorname{span}\{\psi_1,\dots,\psi_M\}
\]
of functions on the stochastic state space.

\item Generate \(N\) Monte Carlo trajectories of the uncontrolled stochastic
state process (using a scheme of choice),
\[
X^1,\dots,X^N.
\]

\item Initialize the pathwise terminal costs by
\[
Y_K^n(z)=0,
\qquad
n=1,\dots,N,
\quad z\in D_z^K.
\]

\item For
\[
k=K-1,\dots,0,
\]
perform backward iteration. For each immediate control
\[
\hat\alpha\in\{0,1\},
\]
each simulated trajectory \(X^n\), and every attainable clock value
\[
z\in D_z^k,
\]
compute a sample approximation
\[
G_k^{n,\hat\alpha}(z)
\]
of
\[
\int_{t_k}^{t_{k+1}}
h
\bigl(
s,
X_s^n,
\hat\alpha(z+s-t_k),
\hat\alpha
\bigr)\,ds
+
c
\left|
\hat\alpha
-
\mathbbm{1}_{(0,\infty)}(z)
\right|,
\]
where the integral is evaluated by a suitable quadrature rule along the
simulated trajectory.

For each \(z\in D_z^k\) and
\(\hat\alpha\in\{0,1\}\), define the regression function by
\[
\phi_k^{z,\hat\alpha}
\in
\argmin_{\phi\in\Phi}
\sum_{n=1}^N
\left(
G_k^{n,\hat\alpha}(z)
+
Y_{k+1}^n
\left(
\hat\alpha(z+t_{k+1}-t_k)
\right)
-
\phi(X_{t_k}^n)
\right)^2.
\]

For each trajectory \(n\) and clock value \(z\in D_z^k\), choose
\[
\alpha_k^n(z)
\in
\arg\min_{\hat\alpha\in\{0,1\}}
\phi_k^{z,\hat\alpha}(X_{t_k}^n),
\]
and update the pathwise cost-to-go according to
\[
Y_k^n(z)
=
G_k^{n,\alpha_k^n(z)}(z)
+
Y_{k+1}^n
\left(
\alpha_k^n(z)(z+t_{k+1}-t_k)
\right).
\]

The functions
\[
C_k^{\hat\alpha}(x,z)
\approx
\phi_k^{z,\hat\alpha}(x)
\]
provide the corresponding regression approximations of the conditional
cost-to-go functions.
\end{enumerate}
\end{algorithm}

\subsection{Optimality of discrete Dynamic Programming in Open-Loop}

The convergence analysis uses the finite-horizon control model of
Bertsekas and Shreve \cite{bertsekas}. We therefore formulate the
discrete-control problem introduced above in their distributional framework.
The finite horizon is given by the control grid
\[
D
=
\{0=t_0<t_1<\cdots<t_K=T\}.
\]
The state and control spaces are
\[
S
=
\mathbb R^d\times[0,\infty),
\qquad
C
=
\{0,1\},
\]
respectively.
For each \(t_k\in D\setminus\{T\}\), let
\[
\mathcal W_{t_k}
=
C([t_k,t_{k+1}];\mathbb R^r),
\]
and let \(p_{t_k}\) denote the Wiener measure corresponding to a Brownian
motion starting from zero at \(t_k\).
By the assumptions on the coefficients, the state equation admits a unique
strong solution. Moreover, by the measurable functional representation for
stochastic differential equations established in \cite{kallenberg_sde}, there
exists a Borel-measurable map
\[
\Phi_{t_k}
:
\mathbb R^d\times\mathcal W_{t_k}
\longrightarrow
C([t_k,t_{k+1}];\mathbb R^d)
\]
which maps an initial state and a Brownian path segment to the corresponding
solution of the SDE on \([t_k,t_{k+1}]\).
For the state transition, only the terminal value of this local solution is
retained. We therefore define the Borel-measurable transition map
\[
f_{t_k}
:
S\times C\times\mathcal W_{t_k}
\longrightarrow
S
\]
by
\[
f_{t_k}(x,z,\alpha,w)
=
\left(
\Phi_{t_k}(x,w)(t_{k+1}),
\,
\alpha(z+\Delta t_k)
\right).
\]
The associated transition kernel is
\[
\eta^{t_k}
:
S\times C
\longrightarrow
\mathcal P(S),
\qquad
\eta^{t_k}(x,z,\alpha)
=
p_{t_k}
\circ
f_{t_k}(x,z,\alpha,\cdot)^{-1}.
\]
Since control decisions are taken precisely at the nodes of \(D\), there is
no additional state-dependent control constraint and we set
\[
U_{t_k}(x,z)
=
C
=
\{0,1\}.
\]
The running cost accumulated over a control interval depends on the entire
state trajectory on that interval. To obtain a one-stage cost depending only
on the current state and control, as required in the finite-horizon model of
Bertsekas and Shreve, we take its conditional expectation given the current
state and immediate control. Thus,
\begin{align*}
g_{t_k}(x,z,\alpha)
=
\mathbb E
\Bigg[
&
\int_{t_k}^{t_{k+1}}
h
\bigl(
s,
X_s,
\alpha(z+s-t_k),
\alpha
\bigr)\,ds
+
c
\left|
\alpha-\mathbbm{1}_{(0,\infty)}(z)
\right|
\;\Bigg|\;
X_{t_k}=x
\Bigg].
\end{align*}
Using the measurable solution map above, this can equivalently be written as
\begin{align*}
g_{t_k}(x,z,\alpha)
=
\int_{\mathcal W_{t_k}}
\Bigg[
&
\int_{t_k}^{t_{k+1}}
h
\bigl(
s,
\Phi_{t_k}(x,w)(s),
\alpha(z+s-t_k),
\alpha
\bigr)\,ds
\\
&\qquad
+
c
\left|
\alpha-\mathbbm{1}_{(0,\infty)}(z)
\right|
\Bigg]
p_{t_k}(dw).
\end{align*}
Hence, the full local solution path produced by \(\Phi_{t_k}\) is used to
evaluate the expected one-stage cost, whereas only its terminal value enters
the state transition.
The terminal cost is identically zero. Together with the prescribed initial
state
\[
(x_0,z_0)\in S,
\]
these objects define the finite-horizon control model in the sense of
Bertsekas and Shreve.
We shall use the following convention for the composition of stochastic kernels;
see \cite{kallenberg} for this notation.. Let $U, V, T$ be Borel spaces and
\[
\mu: U \to\mathcal P(V), \quad \nu : U \times V \to\mathcal P(T)
\]
be stochastic kernels. We obtain a stochastic kernel through $\mu \otimes \nu : U \to \mathcal P(V \times T)$ through 
\[
(\mu\otimes\nu)_u f
=
\int \mu_u(dv)\int \nu_{u,v}(dt) f(v,t) ,
\]
Iterated products of kernels are understood recursively in this sense. 
Especially, if $\nu: V \to \mathcal{P}(T)$ is a probability kernel, 
we can make sense of $\mu \otimes \nu$ by extending $\nu$ trivially 
(i.e. constant in the new variable) to $U \times V$. In the same way we can interpret a 
measure as a stochastic kernel. Handling 
Markov processes forces us to make use of this frequently, and we 
will do so without further notice. We will need a second type of kernel composition $\mu \nu$
for kernels of the type $\mu: U \to \mathcal{P}(V), \; \nu: V \to \mathcal{P}(T)$, defined through
\[
(\mu \nu)_u f = \int \mu_u(dv) \int \nu_v(dt) f(t).
\]
Specifically for a stochastic process, the first type of composition gives the joint law and the second
type the law at final time.
A randomized Markov policy is a collection
\[
\pi
=
(\mu_{t_0},\mu_{t_1},\ldots,\mu_{t_{K-1}})
\]
of stochastic kernels
\[
\mu^{t_k}:S\rightarrow \mathcal{P}(C),
\qquad
k=0,\ldots,K-1.
\]
We denote the set of all such policies by
\(\Pi^D\).

For an initial state \((x_0,z_0)\in S\), the joint distribution of the controlled
state and control process is given by
\[
r_K\bigl(\pi,(x_0,z_0)\bigr)
=
\delta_{x_0, z_0}
\otimes
\mu^{t_0}
\otimes
\eta^{t_0}
\otimes
\mu^{t_1}
\otimes
\cdots
\otimes
\eta^{t_{K-2}}
\otimes
\mu^{t_{K-1}},
\]
which is a measure on
\[
S \times C \times  \hdots \times S \times C,
\]
with the appropriate amount of repetitions. The cost associated with a policy \(\pi\in\Pi^D\) is
\begin{align}
J_\pi^D(x_0,z_0)
=
\int
\sum_{k=0}^{K-1}
g_{t_k}(x_k,z_k,\alpha_{t_k})
\,
dr_K\bigl(\pi,(x_0,z_0)\bigr),
\end{align}
and the optimal cost is
\begin{align}
J^{D,*}(x_0,z_0)
=
\inf_{\pi\in\Pi^D}
J_\pi^D(x_0,z_0).
\end{align}

Next, we define the dynamic programming operators. For
\(1\leq k\leq K-1\),
\begin{align}
T_{t_k}(J)(x,z)
=
\inf_{\alpha\in C}
\left\{
g_{t_{k-1}}(x,z,\alpha)
+
\int_S
J(x',z')
\,d\eta^{t_{k-1}}\bigl((x,z),\alpha\bigr)
\right\},
\end{align}
and
\begin{align}
T_{t_K}(J)(x,z)
=
\inf_{\alpha\in C}
g_{t_{K-1}}(x,z,\alpha),
\end{align}
where we adopt the convention that \(J_{t_K}=0\).
We summarize below the finite-horizon optimality result from
\cite{bertsekas}, adapted to the setting of the present paper, which
establishes the optimality of the dynamic programming recursion over
randomized Markov policies.
\begin{proposition}\label{prop:bertsekas}
Let \(J_{t_K}\) be the zero function on \(S\). Then
\begin{align}\label{opt:dyn_prog}
J^{D,*}
=
T_{t_1}\circ\hdots\circ T_{t_K}(J_{t_K}).
\end{align}
There exists an optimal strict control, i.e. there are measurable functions
\[
u_{t_k}:S\to C,\qquad 0\leq k\leq K-1,
\]
such that
\[
J^{D,*}=J_\pi^D,
\]
where
\[
\pi=(\mu_{t_0},\hdots,\mu_{t_{K-1}})
\]
is given by
\[
\mu_{t_k}(x,z)
=
\delta_{u_{t_k}(x,z)}.
\]
Indeed, \(u_{t_k}\) is given through one of the possible measurable selections by
\begin{align}
u_{t_k}(x,z)
\in
\argmin_{u\in C}
\left\{
g_{t_k}(x,z,u)
+
\int_S
J(x',z')
\,d\eta^{t_k}((x,z),u)
\right\}.
\end{align}
\end{proposition}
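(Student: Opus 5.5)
The plan is to verify the hypotheses of the finite-horizon results in Bertsekas and Shreve \cite{bertsekas} (Chapter 8, Proposition 8.2 and its corollaries for the lower semianalytic model). Under the nonnegative or bounded-cost assumption, those results give two things. First, the optimal cost over randomized Markov policies equals the $K$-fold composition of the dynamic programming operators. Second, when the infimum in each operator is attained, a measurable selector exists and yields an optimal nonrandomized Markov policy.

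\textbf{Step 1: the model is a Borel model.} The state space $S=\mathbb R^d\times[0,\infty)$ and the finite control space $C=\{0,1\}$ are Borel spaces. The disturbance space $\mathcal W_{t_k}$ is Polish. The map $f_{t_k}$ is Borel, being built from the Borel solution map $\Phi_{t_k}$ of \cite{kallenberg_sde} and the continuous clock update $(z,\alpha)\mapsto\alpha(z+\Delta t_k)$. Hence $\eta^{t_k}(x,z,\alpha)=p_{t_k}\circ f_{t_k}(x,z,\alpha,\cdot)^{-1}$ is a Borel stochastic kernel.

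\textbf{Step 2: the one-stage cost is Borel, nonnegative and bounded.} The integrand $(s,x,z,\alpha,w)\mapsto h(s,\Phi_{t_k}(x,w)(s),\alpha(z+s-t_k),\alpha)$ is jointly Borel. This uses that $\mathcal F$ is open (so $\mathbbm 1_{\mathcal F}$ is Borel), that $\mathbbm 1_{[0,\tau)}$ is Borel, and that path evaluation $(w,s)\mapsto\Phi_{t_k}(x,w)(s)$ is jointly measurable by continuity in $s$. The integrand also satisfies $0\le h\le C_1+C_2$. Tonelli's theorem then shows that $g_{t_k}$ is Borel on $S\times C$, with
\[
0\le g_{t_k}\le (C_1+C_2)\Delta t_k + c .
\]
So the model satisfies both the positive (P) and bounded assumptions, and $J_\pi^D$ is well defined for every $\pi\in\Pi^D$.

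\textbf{Step 3: conclusions.} Proposition 8.2 of \cite{bertsekas} gives \eqref{opt:dyn_prog} over all Markov (indeed all history-dependent) randomized policies. I will check that their indexing of operators matches ours: $T_{t_{k}}$ uses stage $k-1$ data, and $T_{t_K}$ carries the zero terminal cost. The argument is by backward induction, showing that each iterate $J_k=T_{t_{k+1}}\circ\cdots\circ T_{t_K}(0)$ is Borel and bounded. Given that $J_{k+1}$ is Borel and bounded, the map
\[
(x,z,\alpha)\mapsto g_{t_k}(x,z,\alpha)+\int J_{k+1}\,d\eta^{t_k}((x,z),\alpha)
\]
is Borel and bounded, since integrating a bounded Borel function against a Borel kernel is Borel. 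Because $C$ is finite, the infimum is a minimum of two Borel functions and is therefore Borel.

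For the existence of optimal strict controls, I avoid general selection theorems. Define explicitly
\[
u_{t_k}(x,z)=\mathbbm 1\{Q_k(x,z,1)<Q_k(x,z,0)\},
\]
where $Q_k$ is the bracket above. This is Borel and attains the minimum. A standard backward induction then shows $J^D_\pi=J_k$ for the induced deterministic policy $\pi=(\delta_{u_{t_0}},\ldots,\delta_{u_{t_{K-1}}})$: disintegrate $r_K$ at each stage using the kernel composition rules, and use that $J_\pi^D$ restricted to stages $k,\ldots,K-1$ satisfies the same recursion with the minimum replaced by evaluation at $u_{t_k}$. Finally, $J^D_\pi\ge J^{D,*}$ trivially, while $J^{D,*}=J_0$ by \eqref{opt:dyn_prog}, which gives $J^{D,*}=J_\pi^D$.

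\textbf{Main obstacle.} I expect the hardest step to be the measurability in Step 2: joint Borel measurability of $(x,w)\mapsto\int h(s,\Phi_{t_k}(x,w)(s),\dots)\,ds$. This relies on the chosen version of $\Phi_{t_k}$ being jointly Borel in $(x,w)$ with continuous paths. I would take this directly from \cite{kallenberg_sde} and note that the zero boundary occupation assumption is not needed at this stage.
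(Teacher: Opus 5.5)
Your proposal follows the paper's route. You cite Proposition~8.2 of Bertsekas--Shreve for the dynamic programming identity, and you use the finiteness of $C$ to write an explicit Borel minimizer as the indicator of a comparison set. The paper breaks ties towards $1$ with $\leq$, you break them towards $0$ with $<$; both work. Your Borel-model and boundedness checks supply hypotheses the paper leaves implicit.

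The one ingredient the paper names that you omit is the reduction of the nonstationary model (Bertsekas--Shreve, Section~10.1). Chapter~8 is formulated for a stationary model with a single kernel and a single one-stage cost. Here $g_{t_k}$ and $\eta^{t_k}$ genuinely depend on $k$, because the grid may be nonuniform and $b$, $\sigma$ depend on time. So checking that the indexing of the $T_{t_k}$ matches is not enough: you must first augment the state by the stage index, i.e.\ work on $S\times\{0,\dots,K-1\}$.

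Alternatively, you can avoid the citation altogether. Each $Q_k$ is bounded and Borel, so the backward induction you already use for $J^D_\pi=J_k$ also covers an arbitrary randomized Markov policy $\pi'=(\mu'_{t_0},\dots,\mu'_{t_{K-1}})$. Let $Q^{\pi'}_k$ denote $Q_k$ with $J_{k+1}$ replaced by the cost-to-go $J^{\pi'}_{k+1}$. If $J^{\pi'}_{k+1}\geq J_{k+1}$, then
$J^{\pi'}_k(x,z)=\int \mu'_{t_k}(d\alpha\,|\,x,z)\,Q^{\pi'}_k(x,z,\alpha)\geq\min_{\alpha\in C}Q_k(x,z,\alpha)=J_k(x,z)$.
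This gives $J^{D,*}\geq J_0$ directly, and combined with your explicit deterministic policy it yields the whole statement.
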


\begin{proof}
The proof is due to \cite{bertsekas}. It requires a combination of 
Proposition~8.2 and a reduction of the nonstationary model as in 
Chapter 10.1. 
New is only that the dynamic programming algorithm gives a proper 
measurable feedback control, which is a direct consequence of the 
finiteness of the control space. A possible measurable selection is
\begin{align*}
u_{t_k}(x,z) = \mathbbm{1}_C(x,z),
\end{align*}
where
\begin{align*}
C = \Bigg\{
(x,z)\;\Bigg|\;
g_{t_k}(x,z,1)
& +
\int_S
J_{t_k}(x',z')
\,d\eta^{t_k}((x,z),1) \\
& \leq
g_{t_k}(x,z,0)
+
\int_S
J_{t_k}(x',z')
\,d\eta^{t_k}((x,z),0)
\Bigg\}.
\end{align*}
\end{proof}
If we are given the uncontrolled state process \(X\) and an initial condition
\(z_0\) for the clock, we can use the feedback control functions \(u_{t_k}\)
from \Cref{prop:bertsekas} to define recursively a feedback policy on the grid
\(D\) through
\begin{align*}
& \zeta^f_0 = z_0, \\
& \alpha^f_{t_k}
=
u_{t_k}(X^f_{t_k},\zeta^f_{t_k}), \\
& \zeta_{t_{k+1}}^f
=
\alpha_{t_k}^f
\bigl(
\zeta_{t_k}^f
+
\Delta t_k
\bigr).
\end{align*}
We extend the grid control to a process on \([0,T]\) by keeping it constant
between consecutive control dates. That is,
\[
\alpha_s^f
=
\alpha_{t_k}^f,
\qquad
s\in(t_k,t_{k+1}],
\]
so that
\[
\alpha^f\in\mathcal A^D.
\]
We refer to \(\alpha^f\) as the continuation of the discrete feedback policy.
It is a feedback control only at the grid points and should not be interpreted
as a continuous-time feedback control.
The next proposition is essential when comparing approximations of
continuous-time controls to continuations of feedback controls on the grid.
The former are generally not of feedback type.

\begin{proposition}\label{feedback}
Let \((W,X,\zeta,\alpha)\) be an admissible system of the
continuous-time problem with \(\alpha\in\mathcal A^D\). Then we can choose an
optimal feedback control on the grid and construct \(\alpha^f\) as above, such that we have for the
corresponding state process \((X^f,\zeta^f)\)
\[
J(0,x_0,z_0;\alpha^f)
\leq
J(0,x_0,z_0;\alpha).
\]
\end{proposition}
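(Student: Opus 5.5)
The plan is to reduce the claim to \Cref{prop:bertsekas}. Two things are needed. First, the cost $J(0,x_0,z_0;\alpha)$ of the given admissible system with $\alpha\in\mathcal A^D$ should be expressible as $J^D_{\pi}(x_0,z_0)$ for some policy $\pi$ that may depend on history. Second, the infimum over such history-dependent policies should coincide with $J^{D,*}$, which is attained by the strict Markov feedback $u_{t_k}$. Evaluating the continuation $\alpha^f$ of that feedback on the canonical Wiener space then gives $J(0,x_0,z_0;\alpha^f)=J^{D,*}(x_0,z_0)$, and the inequality follows.

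\textbf{Step 1 (cost decomposition of the given system).} Fix $(W,X,\zeta,\alpha)$ with $\alpha\in\mathcal A^D$, and write $\mathcal F_{t_k}$ for the filtration generated by the system. Three facts drive the argument.
\begin{itemize}
\item The decision $\alpha_{t_{k+1}}$, which is constant on $(t_k,t_{k+1}]$, is $\mathcal F_{t_k+}$-measurable. Since left-continuity fixes the value right after $t_k$, I treat it as the decision taken at $t_k$.
\item The clock evolves deterministically on the interval: $\zeta_s=\alpha_{t_{k+1}}(\zeta_{t_k}+s-t_k)$ for $s\in(t_k,t_{k+1}]$.
\item $W$ is an $\mathcal F$-Brownian motion, so the increments $(W_s-W_{t_k})_{s\in[t_k,t_{k+1}]}$ are independent of $\mathcal F_{t_k+}$, which contains the decision.
\end{itemize}
By the strong uniqueness used in \cite{kallenberg_sde}, $X|_{[t_k,t_{k+1}]}=\Phi_{t_k}(X_{t_k},W_\cdot-W_{t_k})$. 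Conditioning on $\mathcal F_{t_k+}$ then shows that the $k$-th summand of the cost (running cost plus switching cost) has conditional expectation $g_{t_k}(X_{t_k},\zeta_{t_k},\alpha_{t_{k+1}})$. It also shows that the conditional law of $(X_{t_{k+1}},\zeta_{t_{k+1}})$ is $\eta^{t_k}((X_{t_k},\zeta_{t_k}),\alpha_{t_{k+1}})$. Summing over $k$ gives
\[
J(0,x_0,z_0;\alpha)=\mathbb E\Big[\textstyle\sum_{k}g_{t_k}(X_{t_k},\zeta_{t_k},\alpha_{t_{k+1}})\Big].
\]

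\textbf{Step 2 (reduction to Markov policies; main obstacle).} The decisions in Step 1 depend on the past and on extra randomization contained in $\mathcal F$, not only on $(X_{t_k},\zeta_{t_k})$. So the joint law of $(X_{t_k},\zeta_{t_k},\alpha_{t_{k+1}})_k$ corresponds to a randomized history-dependent policy in the Bertsekas--Shreve model. This is the main obstacle. There are two ways to handle it.
\begin{itemize}
\item \emph{Option one.} Invoke the part of \cite{bertsekas}, Prop.~8.2 with Ch.~10.1, stating that the optimal value over randomized history-dependent policies equals the DP value $T_{t_1}\circ\cdots\circ T_{t_K}(0)$.
\item \emph{Option two (more self-contained).} Run a backward induction directly with $J_k:=T_{t_{k+1}}\circ\cdots\circ T_{t_K}(0)$, using only the conditional-law identity from Step 1 together with $g+\int J_{k+1}\,d\eta\ge J_k$ pointwise. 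This shows
\[
\mathbb E\Big[\textstyle\sum_{\ell\ge k} g_{t_\ell}\,\Big|\,\mathcal F_{t_k}\Big]\ge J_k(X_{t_k},\zeta_{t_k}).
\]
At $k=0$ this yields $J(0,x_0,z_0;\alpha)\ge J^{D,*}(x_0,z_0)$.
\end{itemize}
Option two avoids checking measurability of policies altogether, so I would use it. The only care needed is measurability of $J_k$, which is Borel since the control set is finite.

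\textbf{Step 3 (feedback attains the bound).} Construct $\alpha^f$ from the selections $u_{t_k}$ of \Cref{prop:bertsekas} on the same probability space, using the uncontrolled $X$ and $W$. This gives an admissible system: $\alpha^f$ is left-continuous, adapted, and piecewise constant, and $W$ remains Brownian because $\alpha^f$ is a functional of $X$. Repeating Step 1 for this system, each conditional expectation equals the minimizing value, so the inequalities of Step 2 become equalities. Hence $J(0,x_0,z_0;\alpha^f)=J^{D,*}(x_0,z_0)\le J(0,x_0,z_0;\alpha)$.
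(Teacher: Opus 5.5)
Your proof is correct, but it takes a different route from the paper's.

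\textbf{The paper's route.} The paper Markovianizes the given control. It defines a randomized Markov policy $\pi^\alpha$ by disintegrating the law of (state, decision) at each grid date given the state. It then shows by induction over $k$ that the one-time marginals of the continuous-time system coincide with those of $r_K(\pi^\alpha,(x_0,z_0))$. Since the cost depends only on these marginals, $J(0,x_0,z_0;\alpha)=J^D_{\pi^\alpha}(x_0,z_0)\geq J^{D,*}(x_0,z_0)$. Attainment by the strict feedback is then taken from \Cref{prop:bertsekas}.

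\textbf{Your route.} You never construct a policy for $\alpha$. Instead you run a verification argument: you compare the conditional cost-to-go of $\alpha$ with the DP functions $J_k$ by backward induction. Conditioning on $\mathcal F_{t_k+}$ absorbs the history dependence and the extra randomization in one step.

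\textbf{What each buys.} The paper's argument yields a representation result, $J(\alpha)=J^D_{\pi^\alpha}$, that ties the continuous-time system to the Bertsekas--Shreve model. The price is reliance on disintegrations and on the external optimality result over $\Pi^D$. Your argument is self-contained: it needs only Borel measurability of the $J_k$ and Borel minimizers $u_{t_k}$. Indeed, it directly gives $J(\alpha^f)=J_0(x_0,z_0)\leq J(\alpha)$, so the identification $J_0=J^{D,*}$ is not needed. It also makes explicit the one probabilistic fact both proofs require. This is that the Brownian increments on $[t_k,t_{k+1}]$ are independent of $\mathcal F_{t_k+}$, which contains the decision $\alpha_{t_{k+1}}=\alpha_{t_k+}$. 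The paper leaves this implicit in its step ``by the equation of state''. Your indexing of the decision as $\alpha_{t_{k+1}}$ is the one consistent with the definition of $\mathcal A^D$.
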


\begin{proof} We consider this result classical. Nevertheless, for reference, we provide a proof in \Cref{app:proofprop}.
\end{proof}

\section{Convergence of the Discrete Algorithm}
We next prove convergence of the discrete dynamic programming approximation to
the continuous-time value function. The upper bound is obtained by discretizing
an optimal continuous-time control. The lower bound is obtained by extracting a
weak limit from a sequence of discrete optimizers and identifying the limit as
an admissible continuous-time control.
Let
\[
D^m
=
\{0=t_0^m<t_1^m<\cdots<t_{K_m}^m=T\}
\]
be a sequence of control grids with mesh size tending to zero as
\(m\to\infty\). For each \(m\), we restrict the admissible controls to
\(\mathcal A^{D^m}\), i.e. controls which are constant between consecutive
points of \(D^m\) and may change only at the control dates.

The state process \(X\) is not discretized and is given by the solution of the
original continuous-time state equation. For a control
\(\alpha^m\in\mathcal A^{D^m}\), the corresponding clock process
\(\zeta^m\) is defined as in the continuous-time problem. In particular, at
the control dates,
\[
\zeta^m_{t_{k+1}^m}
=
\alpha^m_{t_k^m}
\left(
\zeta^m_{t_k^m}
+
\Delta t_k^m
\right).
\]
Thus, the discrete dynamic programming problem differs from the original
continuous-time control problem only through the restriction of the admissible
control class to \(\mathcal A^{D^m}\).
\begin{theorem}\label{thm:discrete_convergence}
Let
\[
\Xi^{m,f}
=
(W^{m,f},X^{m,f},\alpha^{m,f},\zeta^{m,f})
\]
be an optimal feedback-controlled system obtained from the idealized dynamic programming
recursion on the grid \(D^m\), with
\[
\alpha^{m,f}\in\mathcal A^{D^m}.
\]
Then
\[
J(0,x_0,z_0;\Xi^{m,f})
\longrightarrow
V(0,x_0,z_0)
\]
as
\[
|D^m|
\underset{m\to\infty}{\longrightarrow}
0.
\]
\end{theorem}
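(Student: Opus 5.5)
We split the claim into an upper and a lower bound. Since every $\alpha\in\mathcal A^{D^m}$ generates an admissible continuous-time system, $J(0,x_0,z_0;\Xi^{m,f})\geq V(0,x_0,z_0)$ holds trivially. The work is to show
\[
\limsup_{m\to\infty} J(0,x_0,z_0;\Xi^{m,f})\leq V(0,x_0,z_0).
\]
We also want the reverse inequality through compactness, which would identify $V$ as the value attained by a weak limit.

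\textbf{Step 1: upper bound.} Fix $\varepsilon>0$ and choose an admissible $\Xi=(W,X,\alpha,\zeta)\in\Lambda_{0,x_0,z_0}$ with $J(0,x_0,z_0,\Xi)\leq V+\varepsilon$.
\begin{itemize}
\item Since the switching cost is $c>0$, the expected number of switches $N$ of $\alpha$ is finite.
\item Truncating after $n$ switches (keeping $\alpha$ constant afterwards) changes the cost by at most $(C_1+C_2)T\,\mathbb P(N>n)$. So we may assume at most $n$ switches, at stopping times $\sigma_1<\dots<\sigma_n$.
\item Define $\alpha^m\in\mathcal A^{D^m}$ by delaying each switch to the next grid point. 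The new control is still adapted, since the decision for $(t_k,t_{k+1}]$ only uses information up to $t_k$ after rounding up; more precisely, we set $\alpha^m_s=\alpha_{t_k}$ on $(t_k,t_{k+1}]$.
\item The number of switches does not increase, except that switches falling in the same cell may cancel, which only lowers the cost.
\item Pathwise, $\alpha^m$ and $\alpha$ differ on a set of Lebesgue measure at most $n|D^m|$. Likewise $\zeta^m$ and $\zeta$ differ by at most $|D^m|$ outside that set.
\end{itemize}
The running cost $h$ is bounded. The only delicate term is $\mathbbm 1_{[0,\tau)}(\zeta_s)$, which can change only where $\zeta_s\in[\tau-|D^m|,\tau]$; this set has Lebesgue measure at most $n|D^m|$ per path. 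Hence dominated convergence gives $J(\alpha^m)\to J(\alpha)$.

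By \Cref{feedback}, $J(0,x_0,z_0;\alpha^{m,f})\leq J(0,x_0,z_0;\alpha^m)$. Here we use that the optimal feedback cost equals $J^{D,*}$ by \Cref{prop:bertsekas}, independently of the selection. Taking $\limsup$ and then $\varepsilon\downarrow 0$ gives the upper bound.

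\textbf{Step 2: lower bound via weak limits.} This step is redundant given admissibility, but it yields an optimal weak control.
\begin{itemize}
\item Bound the number of switches of $\alpha^{m,f}$ uniformly in $m$: the costs are bounded by $(C_1+C_2)T$, so the expected switch count is at most $(C_1+C_2)T/c$.
\item This gives tightness of $(W^m,X^m,\alpha^m,\zeta^m)$ in $C\times C\times L^1$-type spaces, for instance Skorokhod $M_1$ or $L^1([0,T])$ for the control.
\item Extract a weakly convergent subsequence and use Skorokhod representation. The Brownian property with respect to the joint filtration passes to the limit through the martingale characterization, and the SDE is identified by standard stability arguments.
\item Lower semicontinuity of the switch count under $L^1$ convergence, the zero occupation time of $\partial\mathcal F$, and the measure-zero set $\{\zeta=\tau\}$ control the indicator terms. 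This gives $J(\text{limit})\leq\liminf J(\Xi^{m,f})$.
\end{itemize}

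\textbf{Main obstacle.} The expected difficulty is the discontinuity of $h$ in $\zeta$ at $\tau$ and in $x$ at $\partial\mathcal F$. For the upper bound I would handle it pathwise, as above: it affects only a set of times of measure $O(n|D^m|)$. In the limit argument I would use the occupation-time assumption to justify convergence of the indicator integrals.
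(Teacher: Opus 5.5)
Your proposal is correct and follows essentially the same route as the paper. You discretize an $\varepsilon$-optimal system onto $D^m$, pass to the limit in its cost, compare with the grid optimizer via \Cref{feedback}, and obtain the lower bound from admissibility of $\Xi^{m,f}$, so your weak-limit Step 2 is indeed superfluous. Your explicit non-anticipative rounding $\alpha^m_s=\alpha_{t_k}$ on $(t_k,t_{k+1}]$ is a welcome precision over the paper's unspecified ``projection''.

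One small correction: off the mismatch set only the one-sided bound $\zeta^m_s\ge\zeta_s-|D^m|$ holds for every $m$, since an off--on pair inside one cell leaves $\zeta^m$ unreset for all later times. This does not affect your conclusion $\limsup_m J(\alpha^m)\le J(\alpha)$, because a larger clock only lowers the warm-up penalty and, on each path, cells eventually contain at most one switch.
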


\begin{proof}

We first prove the upper bound
\[
\limsup_{m\to\infty}
J(0,x_0,z_0;\Xi^{m,f})
\leq
V(0,x_0,z_0).
\]
Let $\varepsilon>0$. By the definition of the value function, there exists
an admissible system
$
\Xi^\varepsilon
=
(W^\varepsilon,X^\varepsilon,\alpha^\varepsilon,\zeta^\varepsilon)
\in\Lambda_{0,x_0,z_0}
$
such that
\[
J(0,x_0,z_0;\Xi^\varepsilon)
\leq
V(0,x_0,z_0)+\varepsilon.
\]
For every $m$, define a control
\[
\hat\alpha^{m,\varepsilon}\in\mathcal A^{D^m}
\]
by projecting $\alpha^\varepsilon$ onto the grid $D^m$ and keeping it
constant between consecutive control dates. Let
$\hat\zeta^{m,\varepsilon}$ be the corresponding clock process and set
\[
\hat\Xi^{m,\varepsilon}
=
(W^\varepsilon,X^\varepsilon,
\hat\alpha^{m,\varepsilon},\hat\zeta^{m,\varepsilon}).
\]
The strictly positive switching penalty $c>0$ provides a bound on the
expected number of switches for any admissible control with finite cost,
since the running cost is bounded from below. In particular,
$\alpha^\varepsilon$ has almost surely finitely many switches.
Consequently, on almost every sample path, the distinct switching times
are separated by a strictly positive distance. For sufficiently fine
grids, the projection therefore preserves the number and order of
switches, while the corresponding switching times converge to those of
$\alpha^\varepsilon$. Hence, the total switching penalty of the
projected control eventually coincides with that of the original
control. Moreover, since the clock is determined by the initial clock
state and the most recent shutdown, convergence of the switching times
implies convergence of the corresponding clock processes at every
continuity point. Thus,
\[
(\hat\zeta^{m,\varepsilon},\hat\alpha^{m,\varepsilon})
\longrightarrow
(\zeta^\varepsilon,\alpha^\varepsilon)
\]
almost everywhere on $[0,T]$, almost surely. Since $h$ is bounded
and its discontinuities are contained in a set of zero occupation
measure under the state process, dominated convergence yields
convergence of the expected running costs. Together with preservation
of the switching penalty, this gives
\[
J(0,x_0,z_0;\hat\Xi^{m,\varepsilon})
\longrightarrow
J(0,x_0,z_0;\Xi^\varepsilon),
\qquad m\to\infty.
\]

By \Cref{feedback}, the continuation of the discrete feedback policy is
optimal among controls in $\mathcal A^{D^m}$. Therefore,
\[
J(0,x_0,z_0;\Xi^{m,f})
\leq
J(0,x_0,z_0;\hat\Xi^{m,\varepsilon}).
\]
Taking the limit superior yields
\[
\limsup_{m\to\infty}
J(0,x_0,z_0;\Xi^{m,f})
\leq
J(0,x_0,z_0;\Xi^\varepsilon)
\leq
V(0,x_0,z_0)+\varepsilon.
\]
Since $\varepsilon>0$ was arbitrary, we conclude that
\[
\limsup_{m\to\infty}
J(0,x_0,z_0;\Xi^{m,f})
\leq
V(0,x_0,z_0).
\]

For the lower bound, observe that for every \(m\),
\[
\Xi^{m,f}
=
(W^{m,f},X^{m,f},\alpha^{m,f},\zeta^{m,f})
\in\Lambda_{0,x_0,z_0},
\]
with
\[
\alpha^{m,f}\in\mathcal A^{D^m}.
\]
Hence, by the definition of the value function,
\[
V(0,x_0,z_0)
\leq
J(0,x_0,z_0;\Xi^{m,f})
\]
for every \(m\). Consequently,
\[
V(0,x_0,z_0)
\leq
\liminf_{m\to\infty}
J(0,x_0,z_0;\Xi^{m,f}).
\]

Combining the upper and lower bounds gives
\[
J(0,x_0,z_0;\Xi^{m,f})
\longrightarrow
V(0,x_0,z_0),
\qquad
m\to\infty.
\]
\end{proof}

\section{Numerical Examples}

The numerical experiments are intended to illustrate the complete data-driven workflow. We first describe the available forecast and observation data and the forecast-tracking stochastic model. We then discuss the monthly calibration procedure. Afterwards, we investigate the qualitative structure of the computed feedback policies by means of policy heat maps and representative trajectories. Finally, we evaluate the method in a rolling-horizon implementation on Berlin weather data and compare the resulting costs against a permanently active climate-control strategy.

\subsection{Weather Forecast and Observation Data}

The data consist of weather forecasts and historical realized weather observations obtained from OpenWeather \cite{openweather}. Forecasts are issued every six hours and each forecast provides predictions for the next sixteen days. We always use the most recent forecast in a non-overlapping way. In the experiments below we use temperature and humidity as the environmental variables. The forecast trajectory is denoted by
\[
\mu(t)=\bigl(\mu_1(t),\ldots,\mu_d(t)\bigr).
\]
The purpose of the stochastic model is to construct sample paths which track this moving forecast, reproduce realistic forecast errors, and can be used as input trajectories for the stochastic control algorithm.

\subsection{Forecast-Tracking Environmental Dynamics}

We model temperature and relative humidity jointly by the two-dimensional
forecast-tracking stochastic differential equation
\begin{align*}
dX_t^1
&=
\Bigl(
\theta_1(\mu_1(t)-X_t^1)+\dot\mu_1(t)
\Bigr)\,dt
+
\sigma_{11}\,dW_t^1,
\\
dX_t^2
&=
\Bigl(
\theta_2(\mu_2(t)-X_t^2)+\dot\mu_2(t)
\Bigr)\,dt
+
\sigma_{21}\,dW_t^1
+
\sigma_{22}\,dW_t^2,
\end{align*}
where \(W=(W^1,W^2)\) is a two-dimensional Brownian motion,
\(\theta_1,\theta_2>0\), and
\[
\sigma
=
\begin{pmatrix}
\sigma_{11} & 0\\
\sigma_{21} & \sigma_{22}
\end{pmatrix}.
\]
The deterministic trajectory
\[
\mu(t)
=
\bigl(\mu_1(t),\mu_2(t)\bigr)
\]
denotes the corresponding temperature and humidity forecast. This structure
follows the forecast-tracking idea used in \cite{renzo}. Introducing the centered process
\[
Y_t
=
X_t-\mu(t)
\]
removes the inhomogeneous drift and gives
\begin{align*}
dY_t^1
&=
-\theta_1Y_t^1\,dt
+
\sigma_{11}\,dW_t^1,
\\
dY_t^2
&=
-\theta_2Y_t^2\,dt
+
\sigma_{21}\,dW_t^1
+
\sigma_{22}\,dW_t^2.
\end{align*}
Thus, forecast deviations are modeled by a centered bivariate
Ornstein--Uhlenbeck process with lower-triangular volatility structure.
For observation times
\[
0=t_0<t_1<\cdots<t_N,
\qquad
\Delta t_n=t_{n+1}-t_n,
\]
the exact transition law is Gaussian. Writing
\[
Y_n=Y_{t_n},
\]
we have
\[
Y_{n+1}
=
A_nY_n+\varepsilon_n,
\qquad
A_n
=
\begin{pmatrix}
e^{-\theta_1\Delta t_n} & 0\\
0 & e^{-\theta_2\Delta t_n}
\end{pmatrix},
\]
where
\[
\varepsilon_n
\sim
\mathcal N(0,\Sigma_n)
\]
and
\begin{align*}
\Sigma_n^{11}
&=
\sigma_{11}^2
\frac{1-e^{-2\theta_1\Delta t_n}}{2\theta_1},
\\
\Sigma_n^{12}
=
\Sigma_n^{21}
&=
\sigma_{11}\sigma_{21}
\frac{1-e^{-(\theta_1+\theta_2)\Delta t_n}}
{\theta_1+\theta_2},
\\
\Sigma_n^{22}
&=
\bigl(\sigma_{21}^2+\sigma_{22}^2\bigr)
\frac{1-e^{-2\theta_2\Delta t_n}}{2\theta_2}.
\end{align*}
Equivalently, the transition law of \(X\) is Gaussian with conditional mean
\[
\mu(t_{n+1})
+
A_n\bigl(X_{t_n}-\mu(t_n)\bigr)
\]
and covariance matrix \(\Sigma_n\). To model uncertainty already present at the beginning of each optimization
window, we assume that the forecast was exact at an earlier time
\(-\delta\), i.e.
\[
X_{-\delta}
=
\mu(-\delta),
\]
or equivalently,
\[
Y_{-\delta}=0.
\]
Propagation of the centered Ornstein--Uhlenbeck process from
\(-\delta\) to \(0\) yields
\[
X_0
\sim
\mathcal N\bigl(\mu(0),\Sigma_0(\delta)\bigr),
\]
where
\begin{align*}
\Sigma_0^{11}(\delta)
&=
\sigma_{11}^2
\frac{1-e^{-2\theta_1\delta}}{2\theta_1},
\\
\Sigma_0^{12}(\delta)
=
\Sigma_0^{21}(\delta)
&=
\sigma_{11}\sigma_{21}
\frac{1-e^{-(\theta_1+\theta_2)\delta}}
{\theta_1+\theta_2},
\\
\Sigma_0^{22}(\delta)
&=
\bigl(\sigma_{21}^2+\sigma_{22}^2\bigr)
\frac{1-e^{-2\theta_2\delta}}{2\theta_2}.
\end{align*}
The parameter \(\delta\) can therefore be interpreted as an effective forecast
age and determines the uncertainty of the initial state relative to the
forecast.

\subsection{Monthly Calibration Procedure}

The stochastic forecast model is calibrated month-wise. For a given target
month, only forecast and observation pairs from the same calendar month in
previous years are used. For instance, calibration for July is based on July
data from the preceding years. This preserves the seasonal structure of the
forecast errors while providing several historical trajectories for parameter
estimation.

For each available forecast trajectory, we first form the centered observations
\[
Y_{t_n}
=
X_{t_n}-\mu(t_n).
\]
Empirical quadratic variations and covariations provide an initial estimate of
the volatility matrix. In the two-dimensional setting,
\[
\widehat C
=
\begin{pmatrix}
\widehat C_{11} & \widehat C_{12}\\
\widehat C_{12} & \widehat C_{22}
\end{pmatrix}
\approx
\begin{pmatrix}
\sigma_{11}^2
&
\sigma_{11}\sigma_{21}
\\
\sigma_{11}\sigma_{21}
&
\sigma_{21}^2+\sigma_{22}^2
\end{pmatrix}.
\]
Hence an initial value of the lower-triangular volatility matrix can be obtained
successively from
\[
\sigma_{11}^2
=
\widehat C_{11},
\qquad
\sigma_{21}
=
\frac{\widehat C_{12}}{\sigma_{11}},
\qquad
\sigma_{22}^2
=
\widehat C_{22}-\sigma_{21}^2.
\]
These estimates are used only as starting values for the subsequent likelihood
optimization.
The final parameters
\[
\theta
=
(\theta_1,\theta_2),
\;
\sigma
=
(\sigma_{11},\sigma_{21},\sigma_{22}),
\;
\delta
\]
are calibrated jointly by maximum likelihood. For a centered observation
\(Y_{t_n}\), the exact transition distribution derived above is
\[
Y_{t_{n+1}}
\mid Y_{t_n}
\sim
\mathcal N
\bigl(
A_nY_{t_n},
\Sigma_n(\theta,\sigma)
\bigr).
\]
Writing
\[
R_n
=
Y_{t_{n+1}}-A_nY_{t_n},
\]
the corresponding contribution to the negative log-likelihood, up to an
additive constant independent of the parameters, is
\[
R_n^\top
\Sigma_n(\theta,\sigma)^{-1}
R_n
+
\log\det\Sigma_n(\theta,\sigma).
\]
The initial centered observation is treated using the backward consistency
condition
\[
Y_0
\sim
\mathcal N
\bigl(
0,\Sigma_0(\delta;\theta,\sigma)
\bigr).
\]
Its contribution to the negative log-likelihood is therefore
\[
Y_0^\top
\Sigma_0(\delta;\theta,\sigma)^{-1}
Y_0
+
\log\det\Sigma_0(\delta;\theta,\sigma).
\]
Summing the initial and transition contributions over all historical forecast
trajectories used for the respective month gives the objective function for the
joint maximum likelihood calibration. Thus, the parameters
\[
(\theta_1,\theta_2,\sigma_{11},\sigma_{21},\sigma_{22},\delta)
\]
are optimized simultaneously. In particular, the quadratic-covariation
estimate of \(\sigma\) is not retained as a separate estimator, but serves only
as an initialization of the joint likelihood optimization.
Once the parameters have been calibrated, a stochastic trajectory corresponding
to a new forecast \(\mu\) subsequently propagated according to the forecast-tracking dynamics above.
Specifically, the initial condition is $
X_0
=
\mu(0)+\xi,
$
with
$
\xi
\sim
\mathcal N\bigl(0,\Sigma_0(\delta)\bigr),
$

\subsection{Gaussian evaluation of the one-stage cost}
\label{gaussian-stage-cost}

The one-stage cost in the time-discretized dynamic program is
the conditional expectation of the running cost accumulated over
an entire grid interval, together with the corresponding switching
penalty. For the Ornstein--Uhlenbeck model considered in the numerical
experiments, this expectation can be reduced explicitly to Gaussian
distribution functions.

Recall that the running cost is
\[
    h(t,x,z,\alpha)
    =
    C_1\alpha
    +
    C_2(1-\alpha)\mathbbm{1}_{\mathcal F}(x)
    +
    C_2\alpha\mathbbm{1}_{\mathcal F}(x)
    \mathbbm{1}_{[0,\tau)}(z),
\]
where
\[
    \mathcal C=[a,b]\times[e,o],
    \qquad
    \mathcal F=\mathbb R^2\setminus\mathcal C.
\]
Fix a grid interval $[t_k,t_{k+1}]$, an initial state $(x,z)$,
and an action $\alpha\in\{0,1\}$. The clock evolves according to
$\zeta_{t_k+u}=\alpha(z+u)$ for $0<u\leq\Delta t_k$.
The one-stage cost is therefore
\[
\begin{aligned}
    g_k(x,z,\alpha)
    &=
    c\left|\alpha-\mathbf1_{(0,\infty)}(z)\right|
    +C_1\alpha\Delta t_k\\
    &\quad+
    C_2\int_0^{\Delta t_k}
    \left[
        (1-\alpha)
        +\alpha\mathbf1_{[0,\tau)}(\alpha(z+u))
    \right]p_u(x)\,du,
\end{aligned}
\]
where
\[
    p_u(x)
    =
    \mathbb P\left(
        X_{t_k+u}\in\mathcal F
        \mid X_{t_k}=x
    \right).
\]

Under the Ornstein--Uhlenbeck dynamics, the conditional distribution
of $X_{t_k+u}$ is bivariate Gaussian,
\[
    X_{t_k+u}\mid X_{t_k}=x
    \sim\mathcal N(\mu_u(x),\Sigma_u).
\]
Writing $\mu_{u,i}(x)$ for the components of the conditional mean,
$\sigma_{u,i}^2$ for the diagonal entries of $\Sigma_u$, and
$\rho_u$ for the conditional correlation, define
\[
\begin{aligned}
    l_1&=\frac{a-\mu_{u,1}(x)}{\sigma_{u,1}},
    &r_1&=\frac{b-\mu_{u,1}(x)}{\sigma_{u,1}},\\
    l_2&=\frac{e-\mu_{u,2}(x)}{\sigma_{u,2}},
    &r_2&=\frac{o-\mu_{u,2}(x)}{\sigma_{u,2}}.
\end{aligned}
\]
Then
\[
\begin{aligned}
    p_u(x)
    =1-&\Bigl[
        \Phi_2(r_1,r_2;\rho_u)
        -\Phi_2(l_1,r_2;\rho_u)\\
        &\quad-\Phi_2(r_1,l_2;\rho_u)
        +\Phi_2(l_1,l_2;\rho_u)
    \Bigr],
\end{aligned}
\]
where $\Phi_2(\cdot,\cdot;\rho)$ denotes the standard bivariate
normal distribution function with correlation $\rho$.

Thus, the spatial expectation of the running cost is expressed
explicitly in terms of Gaussian distribution functions. Only
a one-dimensional deterministic integral over the grid interval
remains, which is evaluated by numerical quadrature. In particular,
no simulation of intermediate state paths or additional
time-discretization of the running cost is required.

The resulting stage cost $g_k(x,z,\alpha)$ is a deterministic function
of the current augmented state and action, and is used consistently
in both the discrete-time Bellman recursion and the primal--dual
formulation of Section~\ref{primal-dual}.

\subsection{Structure of the Optimal Feedback Control}

We first inspect the computed feedback control through policy heat maps and
representative forecast-driven sample paths. Both experiments use the same
numerical setting. Importantly, the deterministic forecast trajectory used in
these experiments is taken from real forecast data, whereas the environmental
state trajectories are sampled from the stochastic model calibrated to
historical forecast errors. 
The environmental state consists of temperature and relative humidity, and the
comfort region $\mathcal C$ and respective violation region $\mathcal F$ are given by
\[
\mathcal C
=
[18,25]\times[0,60], \quad \mathcal F = \R^2 \setminus \mathcal C,
\]
corresponding to temperatures between \(18^\circ\mathrm C\) and
\(25^\circ\mathrm C\) and relative humidity between \(0\%\) and \(60\%\).
We consider a \(24\)-hour horizon with hourly control decisions and a warm-up
time of
\[
\tau=1\ \text{hour}.
\]
The switching cost is \(c=5\), while the running-cost parameters are
\[
C_1=20,
\qquad
C_2=2000.
\]
The continuation values are approximated using polynomial regression of
degree \(22\), trained on \(512000\) simulated trajectories.
These parameters are selected based on the primal--dual
diagnostics described in \Cref{primal-dual}, with the
corresponding estimates and $95\%$ confidence intervals
reported in \Cref{tab:primal-dual}.
\begin{table}[htbp]
\centering
\caption{Primal--dual diagnostics for the discrete-time
stochastic control problem. Standard errors are estimated
from independent out-of-sample trajectories; the gap
standard error accounts for pairing.}
\label{tab:primal-dual}
\begin{tabular}{lrr}
\hline
Quantity & Estimate & Standard error \\
\hline
Dual lower estimate   & 343.8239 & 1.9419 \\
Primal upper estimate & 344.3114 & 1.5524 \\
Absolute gap          & 0.4874   & 2.2083 \\
Relative gap          & 0.1416\% & --- \\
\hline
\end{tabular}
\end{table}
More precisely, the deterministic trajectory \(\mu\) is the actual temperature
and humidity forecast for June 15, 2023, interpolated onto the numerical time
grid. The stochastic model uses the parameters obtained from the monthly
calibration for June. Conditional on the observed forecast \(\mu\), sample
paths of temperature and relative humidity are generated from the calibrated
forecast-tracking Ornstein--Uhlenbeck model using its exact transition law.
Consequently, the forecast entering the experiment is not simulated: only the
deviations of the environmental state from this forecast are sampled from the
calibrated model.
The policy heat maps visualize the resulting feedback decision as a function
of temperature and relative humidity at a fixed time. Since the delayed
activation mechanism introduces the clock variable as an additional state
variable, the policy is shown for several values of the clock. This makes it
possible to inspect how the switching boundary changes with the current
activation status and the elapsed warm-up time.
\begin{figure}[ht]
\centering
\includegraphics[width=0.72\textwidth]{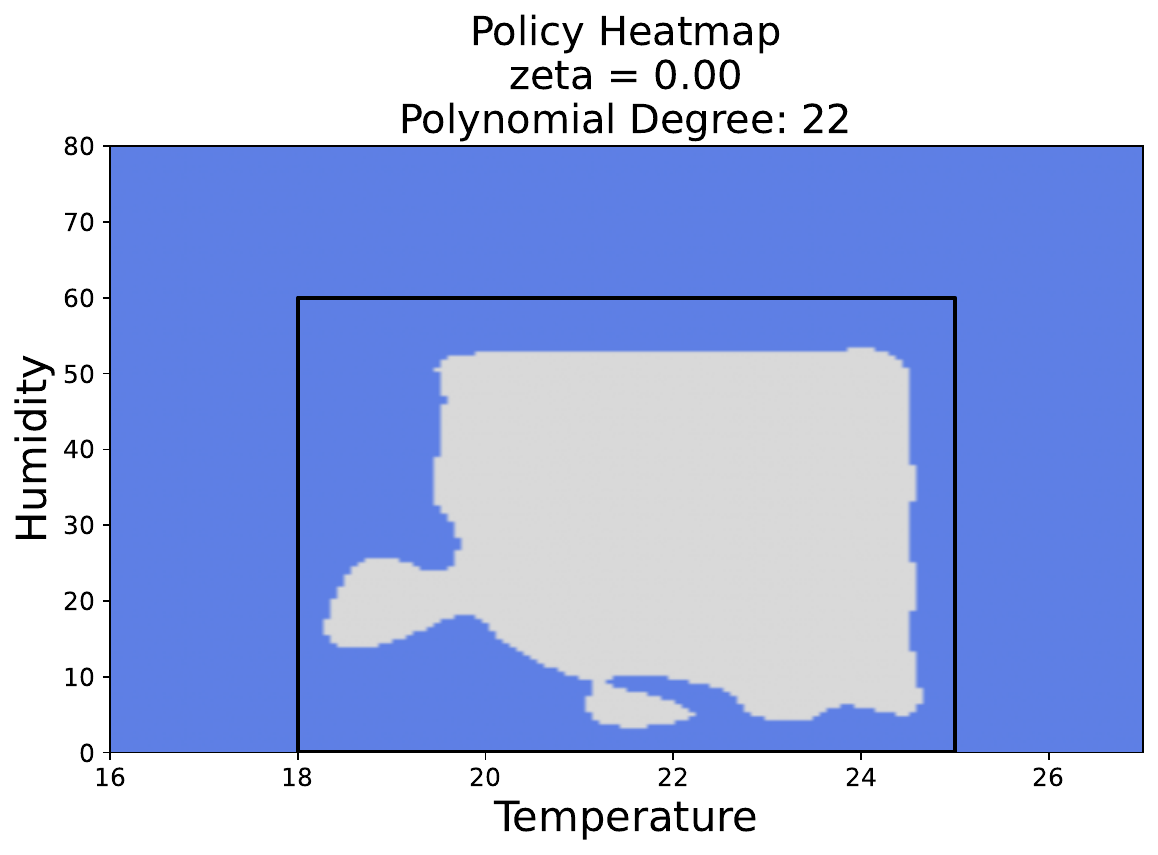}
\includegraphics[width=0.72\textwidth]{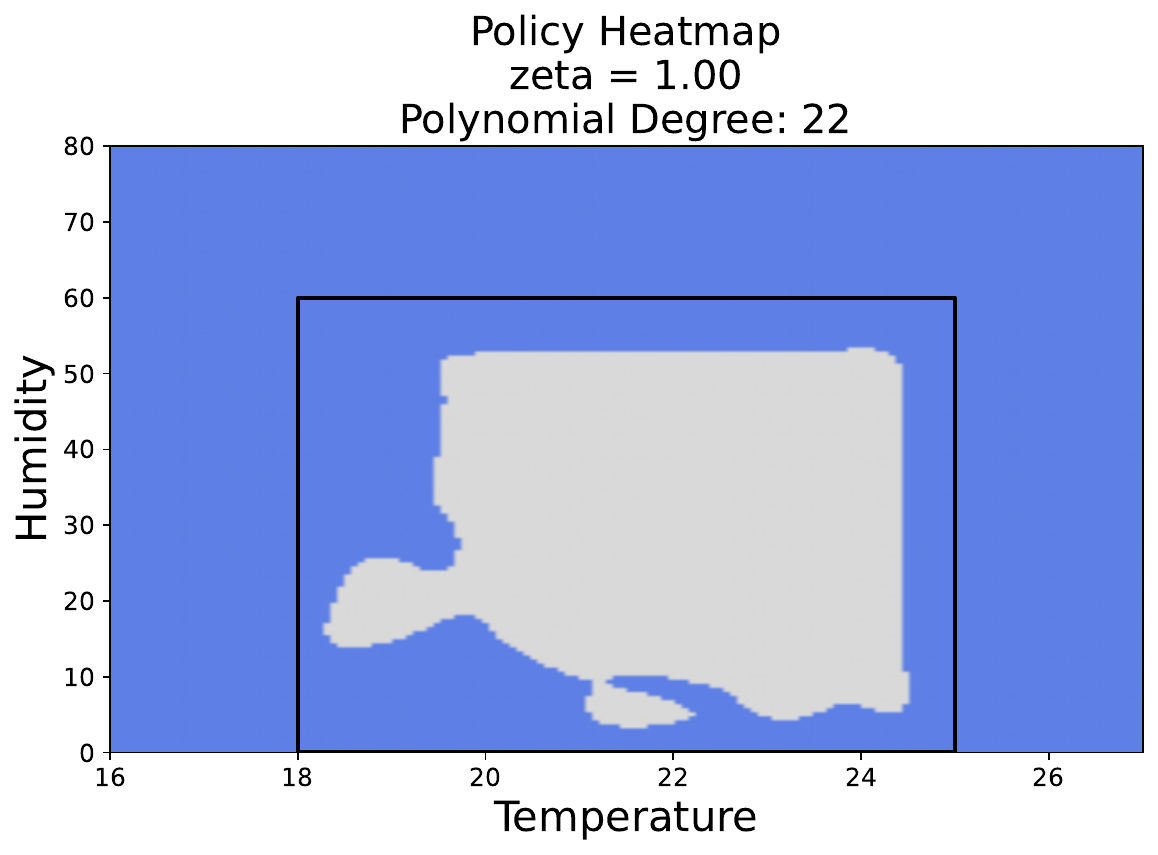}
\caption{Policy heatmap for polynomial degree $22$. 
\textbf{Black lines:} Comfort regime boundary. 
\textbf{Blue region:} Turn AC On.
\textbf{Grey region:} Turn AC Off.
\textbf{Upper plot:} 
Optimal feedback policy for $\zeta=0$.
\textbf{Lower plot:} Optimal feedback policy for $\zeta=1$.}
\label{fig:policy_heatmap_deg5}
\end{figure}

\begin{figure}[ht]
\centering
\includegraphics[width=0.72\textwidth]{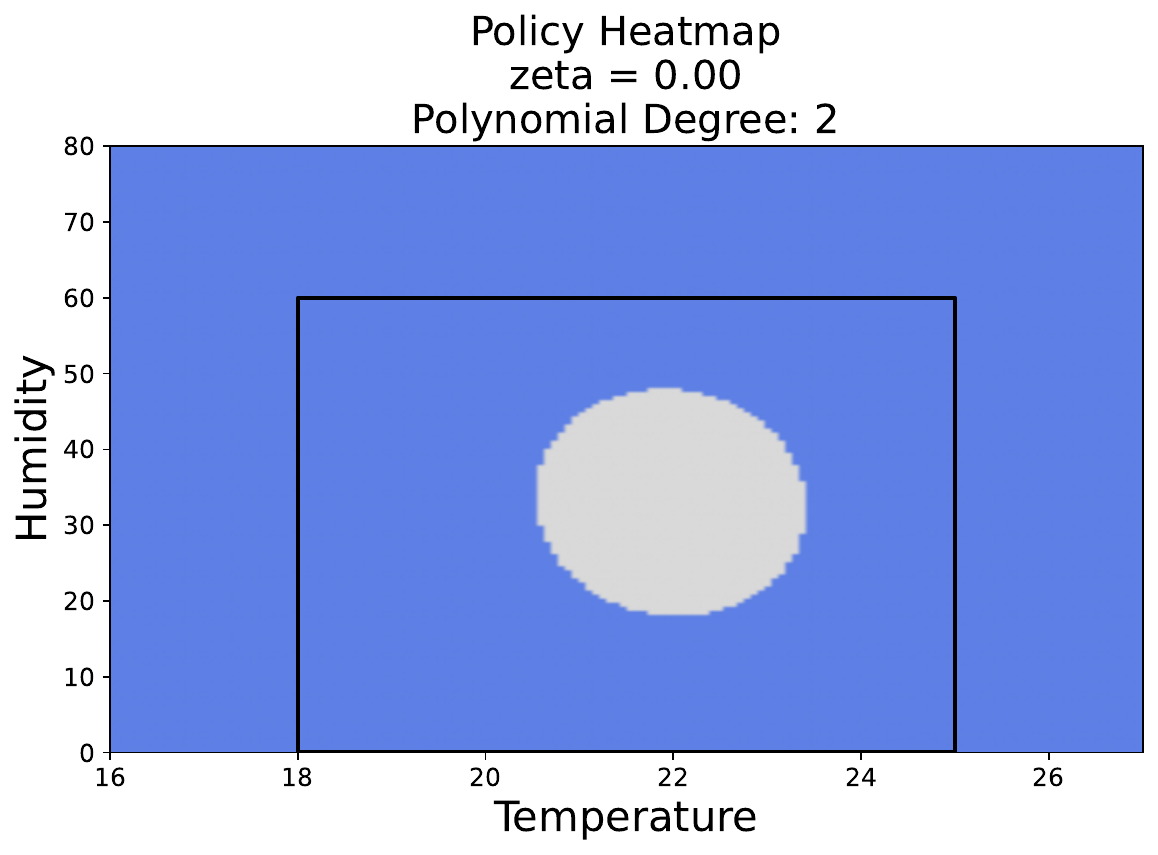}
\includegraphics[width=0.72\textwidth]{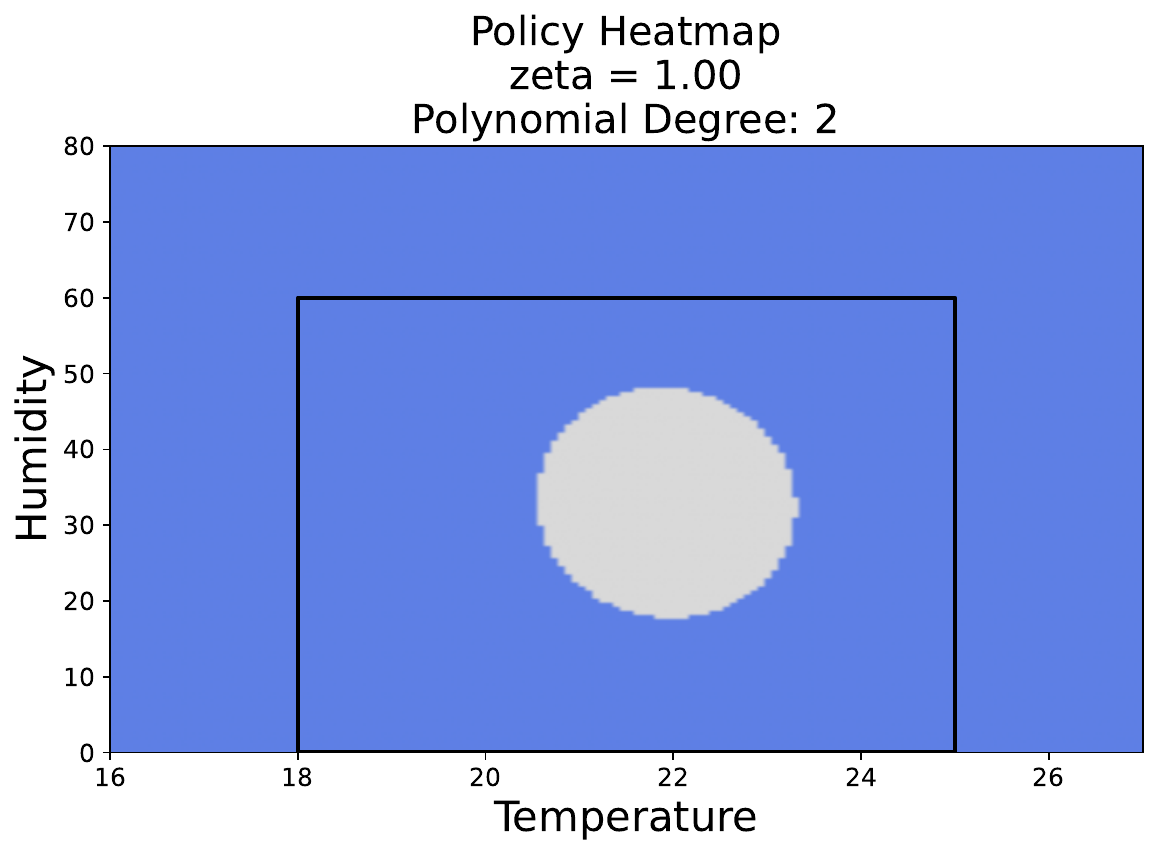}
\caption{Policy heatmap for polynomial degree $2$. 
\textbf{Black lines:} Comfort regime boundary. 
\textbf{Blue region:} Turn AC On.
\textbf{Grey region:} Turn AC Off.
\textbf{Upper plot:} 
Optimal feedback policy for $\zeta=0$.
\textbf{Lower plot:} Optimal feedback policy for $\zeta=1$.}
\label{fig:policy_heatmap_deg2}
\end{figure}

\subsection{Representative Policy Trajectories}

We next examine the temporal behavior of the computed policy along individual
realizations. For these plots, we again condition on the true temperature and
humidity forecast for June 15, 2023, and independently sample environmental
trajectories around this forecast from the calibrated June model. The plotted
temperature and humidity paths are therefore synthetic samples from the
calibrated conditional model, while the forecast trajectories shown alongside
them are the corresponding real forecast data.

The test trajectories are generated independently of the \(8000\) trajectories
used to compute the policy. For each representative realization, the figures
show the sampled temperature and humidity trajectories together with the true
forecast and the corresponding comfort intervals. The final panel displays the
binary control process, with value one indicating that the air-conditioning
system is switched on. The plots thereby illustrate how the computed policy
responds to possible realizations of the environmental conditions around an
actually observed forecast, and in particular how forecast uncertainty,
comfort violations, the one-hour activation delay, and switching costs interact
in the resulting control decisions. The comparison in \Cref{fig:policy_heatmap_deg2,fig:policy_heatmap_deg5} shows the qualitative effect of the delay. When $\zeta=0$, activation must occur before the boundary of the comfort region is reached. Otherwise the system would still be in the warm-up phase when the violation occurs. Once $\zeta=1$, the controller reacts more carefully, since the system is already in regime and turning off carries a larger risk. This anticipatory behaviour is one of the main qualitative consequences of the delayed switching structure.

\begin{figure}[ht]
\centering
\includegraphics[width=0.8\textwidth]{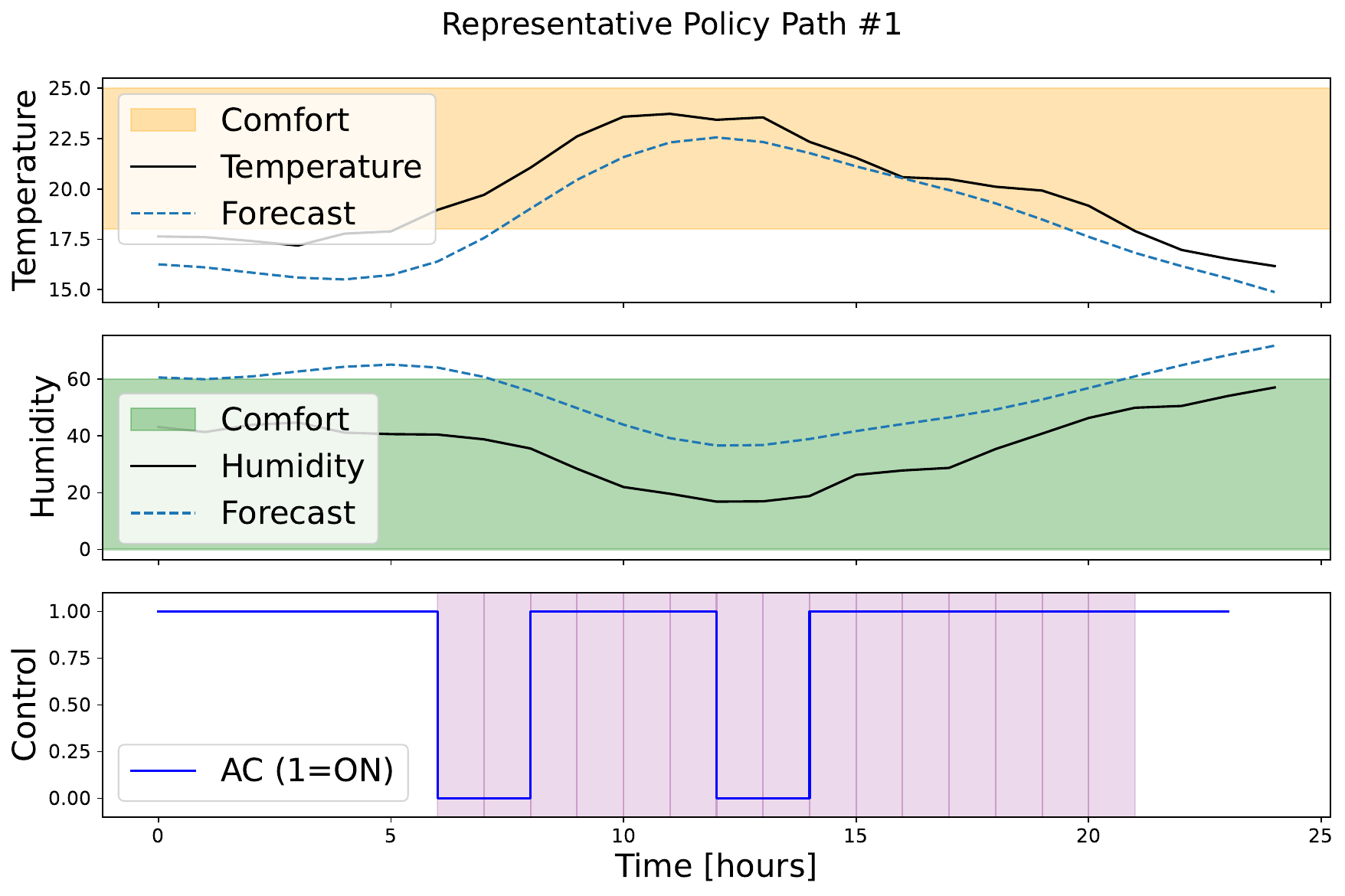}
\caption{Representative policy rollout under synthetic data and real forecasts. The control remains inactive while the environmental variables stay safely inside the comfort region and activates before the violation becomes critical.}
\label{fig:policy_path_1}
\end{figure}

\begin{figure}[ht]
\centering
\includegraphics[width=0.8\textwidth]{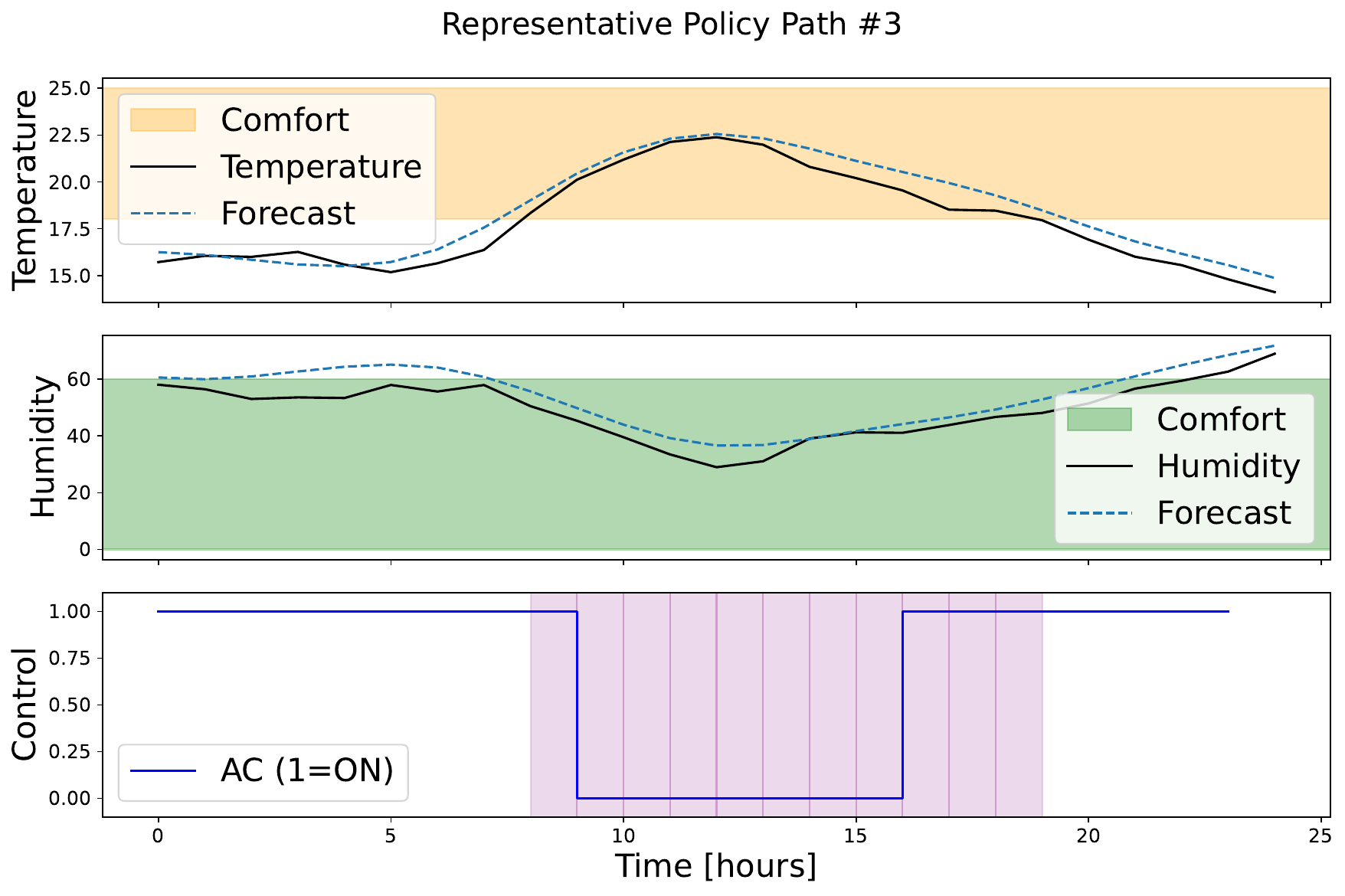}
\caption{Representative policy rollout under synthetic data and real forecasts. The switching times reflect the interaction between forecast uncertainty and the delayed warm-up period.}
\label{fig:policy_path_3}
\end{figure}

\begin{figure}[ht]
\centering
\includegraphics[width=0.8\textwidth]{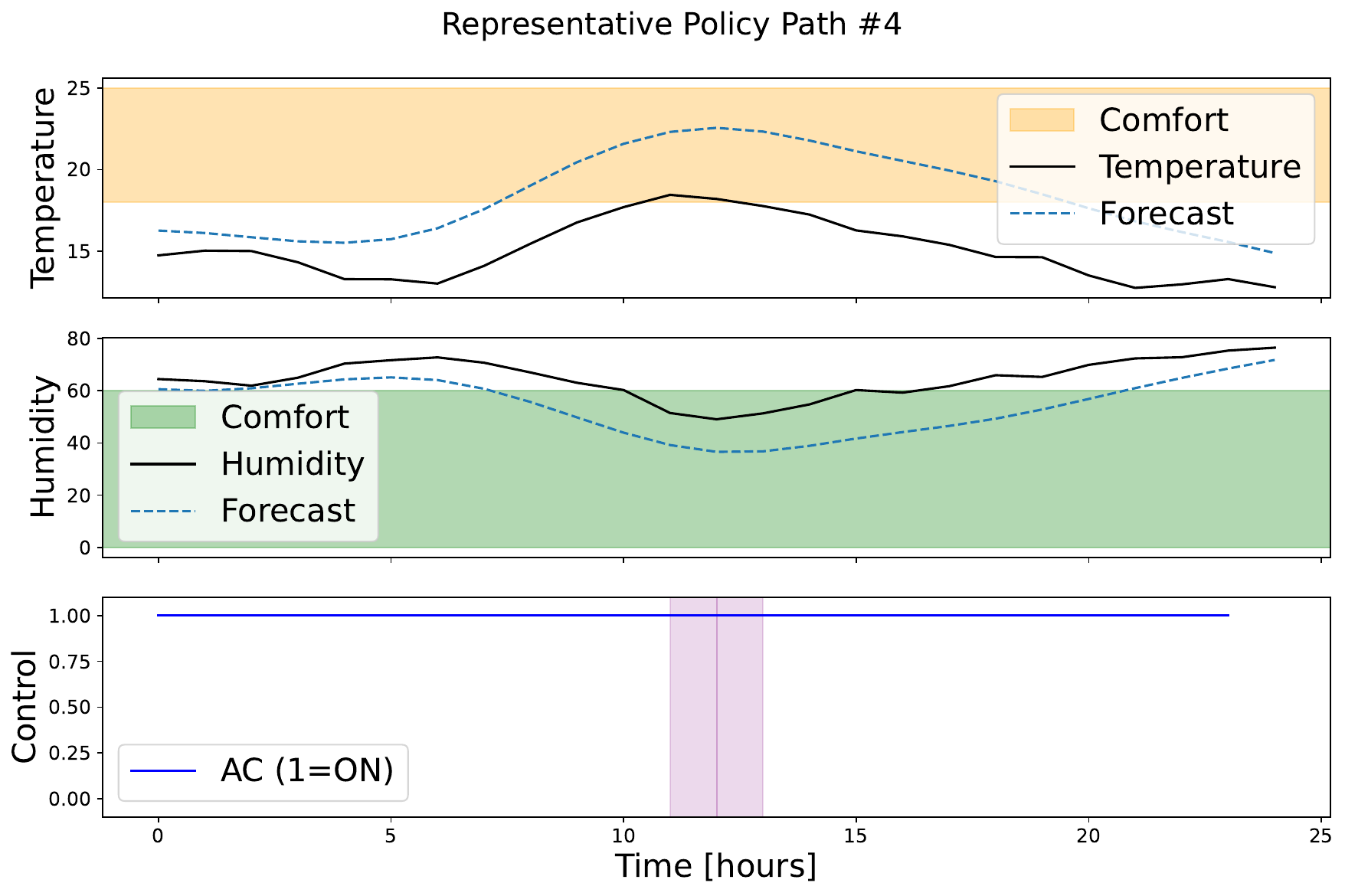}
\caption{Representative policy rollout under synthetic data and real forecasts. The policy activates in advance of potentially inadmissible temperature and humidity regimes.}
\label{fig:policy_path_4}
\end{figure}

Across the examples in \Cref{fig:policy_path_1,fig:policy_path_3,fig:policy_path_4}, the feedback control exhibits the expected qualitative structure. The system is kept off when the environmental process remains far from the failure set, while activation occurs early enough to compensate for the warm-up delay. The behaviour is therefore consistent with the policy heat maps above.

\subsection{Rolling-Horizon Deployment}

For real-data evaluation we use a rolling-horizon procedure. Every six hours a new forecast becomes available. At this time, a twelve-hour stochastic optimal control problem is solved, but only the first six hours of the resulting policy are implemented. The procedure is then repeated with the next forecast. This mitigates boundary effects and continuously incorporates newly available information.

\begin{figure}[ht]
\centering
\begin{tikzpicture}[
    scale=0.9,
    solve/.style={very thick, darkred},
    apply/.style={very thick, blue},
    axis/.style={thick},
    every node/.style={font=\small}
]
\def\yone{0}
\def\ytwo{-2}
\def\ythree{-4}
\draw[axis,->] (0,\yone) -- (14,\yone);
\foreach \x/\t in {0/0,3/6,6/12,9/18,12/24}
{
    \draw (\x,\yone+0.12) -- (\x,\yone-0.12);
    \node[below] at (\x,\yone-0.12) {\t h};
}
\draw[solve] (0,\yone+0.2) -- (6,\yone+0.2);
\node[above,darkred] at (3,\yone+0.2) {solve SOC problem};
\draw[apply] (0,\yone-0.2) -- (3,\yone-0.2);
\node[below,blue] at (1.5,\yone-0.2) {use policy};
\draw[axis,->] (0,\ytwo) -- (14,\ytwo);
\foreach \x/\t in {0/0,3/6,6/12,9/18,12/24}
{
    \draw (\x,\ytwo+0.12) -- (\x,\ytwo-0.12);
    \node[below] at (\x,\ytwo-0.12) {\t h};
}
\draw[solve] (3,\ytwo+0.2) -- (9,\ytwo+0.2);
\node[above,darkred] at (6,\ytwo+0.35) {solve SOC problem};
\draw[apply] (3,\ytwo-0.2) -- (6,\ytwo-0.2);
\node[below,blue] at (4.5,\ytwo-0.35) {use policy};
\draw[axis,->] (0,\ythree) -- (14,\ythree);
\foreach \x/\t in {0/0,3/6,6/12,9/18,12/24}
{
    \draw (\x,\ythree+0.12) -- (\x,\ythree-0.12);
    \node[below] at (\x,\ythree-0.12) {\t h};
}
\draw[solve] (6,\ythree+0.2) -- (12,\ythree+0.2);
\node[above,darkred] at (9,\ythree+0.35) {solve SOC problem};
\draw[apply] (6,\ythree-0.2) -- (9,\ythree-0.2);
\node[below,blue] at (7.5,\ythree-0.35) {use policy};
\end{tikzpicture}
\caption{Rolling-horizon implementation. Every six hours the forecast is updated, a twelve-hour stochastic control problem is solved, and only the first six hours of the policy are implemented.}
\label{fig:rolling_horizon}
\end{figure}

\subsection{Real Weather Data Experiments}

We finally evaluate the rolling-horizon control strategy on real weather data
for Berlin over the full year 2024. This constitutes an out-of-sample
experiment: the stochastic forecast model is calibrated exclusively using data
from the years 2021--2023, while the subsequent policy evaluation is performed
on forecasts and observations from 2024. In contrast to the preceding
experiments, where environmental trajectories were sampled from the calibrated
stochastic model around a real forecast, the policy is now evaluated along the
actually observed temperature and relative-humidity trajectories. The
stochastic model is used only for computing the control policy: at every
rolling-horizon iteration, future environmental scenarios are sampled
conditionally on the currently available real weather forecast. The comfort region
is the same as in the previous section.

The controller is implemented in a rolling-horizon fashion. At each
optimization time, the corresponding real Berlin forecast is used as the
deterministic forecast trajectory \(\mu\). The forecast-tracking stochastic
model is parameterized using the calibration associated with the respective
calendar month, obtained from the 2021--2023 calibration data. Conditional on
the current forecast, \(8000\) trajectories are simulated from the calibrated
model and used to solve a \(12\)-hour stochastic control problem. The resulting
policy is applied to the real observed state only for the first \(6\) hours,
after which a new forecast is used and the optimization problem is solved
again. Both the observations and control decisions are taken on the original
hourly time grid. The continuation values are approximated using polynomial
regression of degree \(10\), and the clock state is carried forward between
successive rolling-horizon problems. The complete rolling-horizon procedure is
illustrated schematically in \Cref{fig:rolling_horizon}.

The resulting out-of-sample control trajectory is shown in
\Cref{fig:berlin_rollout}. The upper panels display the realized temperature
and relative humidity together with the corresponding comfort bounds, while
the lower panel shows the control selected by the rolling-horizon policy. The
control activity is concentrated primarily during periods in which violations
of the comfort region become relevant, while the controller remains inactive
for extended periods when the observed environmental conditions do not require
service.

To assess the economic performance of the policy, we compare it with an
always-on benchmark. In addition to the total and cumulative costs, we record
comfort violations and the number of switching events. The average daily cost
is also computed separately for each calendar month. The resulting monthly
comparison is shown in \Cref{fig:berlin_costs}. The savings are moderate but systematic within the considered ambient
service-demand model. Given the illustrative cost parameters and the
high penalty assigned to violations of the prescribed comfort region,
the resulting control policy tends to favor conservative operation.
These findings should be interpreted as model-based operating-cost
comparisons rather than validated indoor comfort improvements or
measured facility energy savings.

\begin{figure}[ht]
\centering
\includegraphics[width=0.82\textwidth]{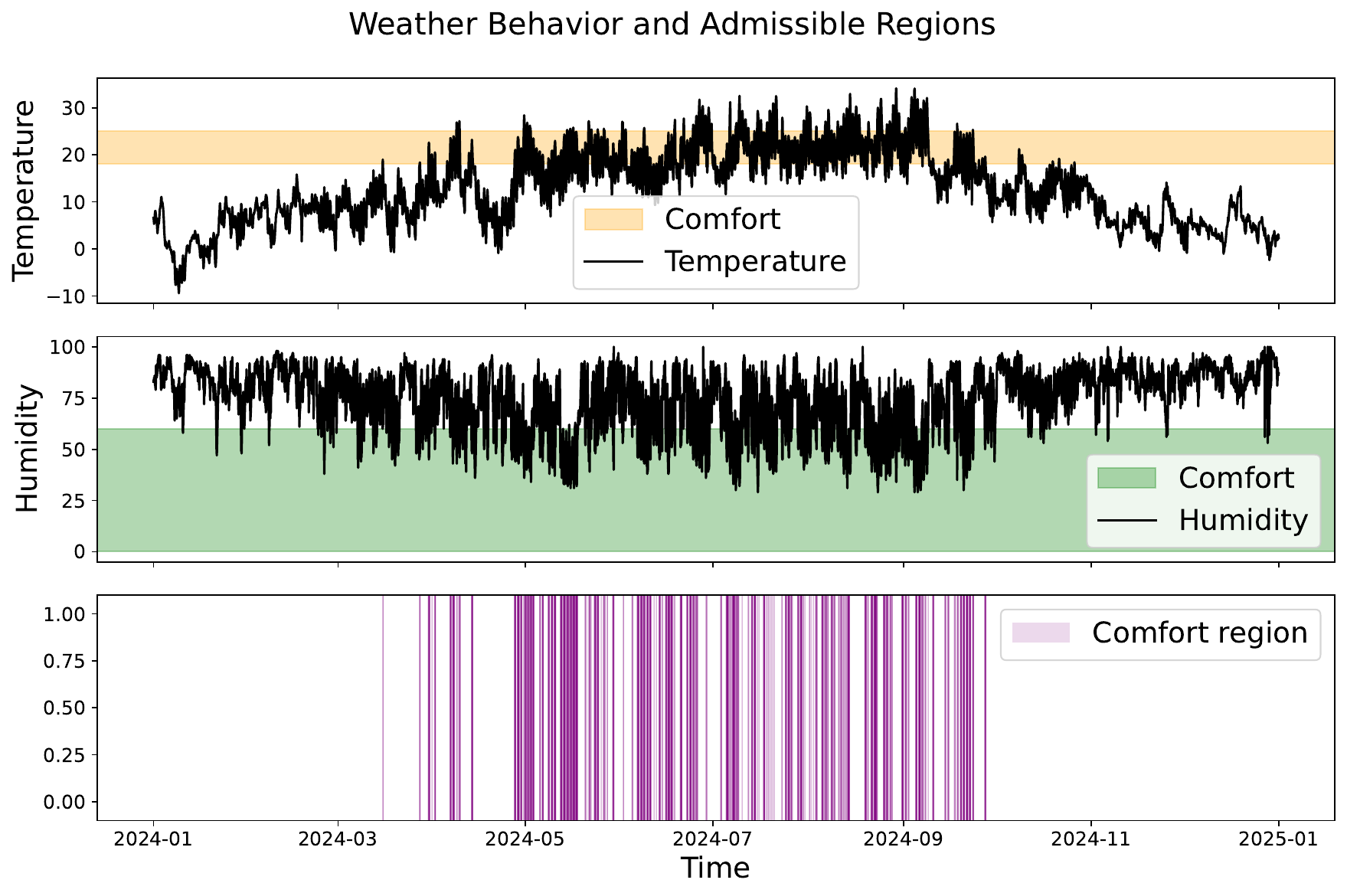}
\caption{Berlin weather data of 2024. \textbf{Upper panels:} These show temperature and humidity together with the comfort bounds. \textbf{Lower Panel:} This shows the time periods in which the data is in comfort regime in violet. We see that the time slices in which the system is in the comfort regime are scarce.}
\label{fig:berlin_rollout}
\end{figure}

\begin{figure}[ht]
\centering
\includegraphics[width=0.9\textwidth]{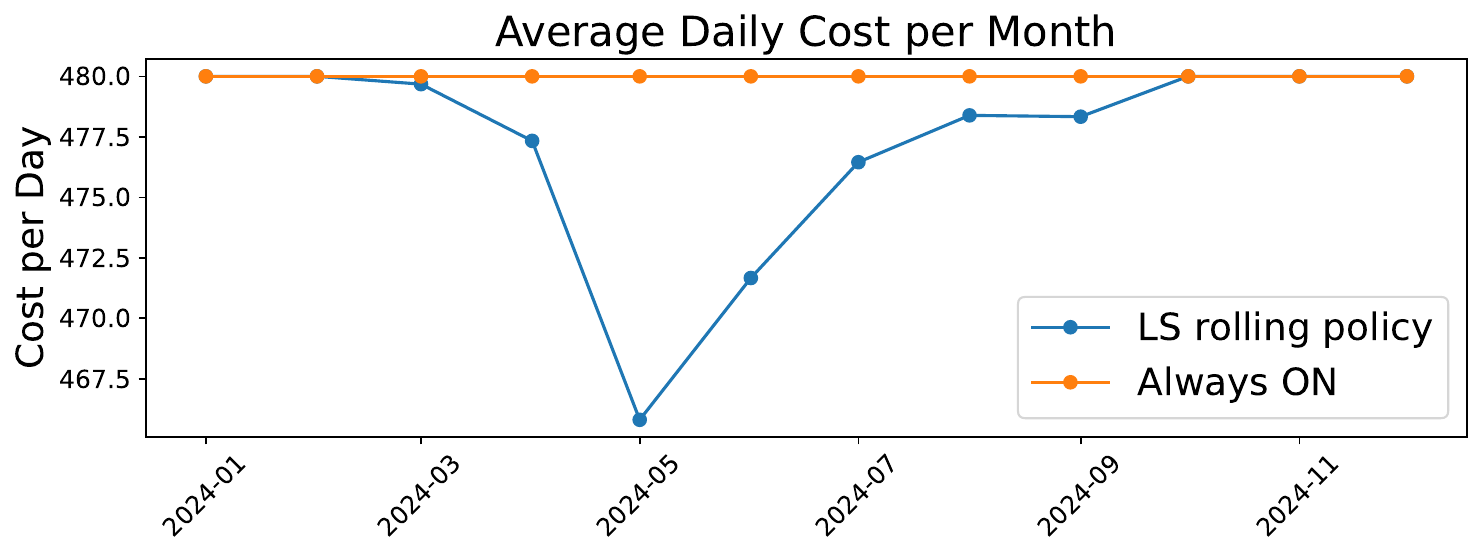}
\caption{Average daily cost per month for the rolling-horizon SOC policy and the permanently active benchmark.}
\label{fig:berlin_costs}
\end{figure}

\subsection{Primal--dual bracket}\label{primal-dual}

To assess the quality of the regression-based approximation, we complement
the out-of-sample policy evaluation by a primal--dual bound for the
time-discretized control problem. The purpose of this analysis is to
quantify the numerical approximation error in the solution of the
discrete-time dynamic program, rather than the time-discretization error
relative to the original continuous-time control problem.

Let
\[
    S_k=(X_k,\zeta_k)
\]
denote the augmented Markov state at time $t_k$, and let $V_k^\Delta$
be the value function of the discrete-time control problem. As
established in Section~\ref{gaussian-stage-cost}, the one-stage cost
$g_k(S_k,a)$ is the conditional expected cost accumulated over the
corresponding grid interval, including the switching penalty.
The Gaussian structure of the Ornstein--Uhlenbeck process allows this
cost to be reduced to a deterministic time integral. In particular,
$g_k(S_k,a)$ depends only on the current state and selected action.

An admissible policy $\widehat\pi$ obtained from the approximate
dynamic program immediately yields the primal upper bound
\[
    V_0^\Delta
    \leq
    U
    :=
    \mathbb E\bigl[J^\Delta(\widehat\pi)\bigr],
\]
where
\[
    J^\Delta(\widehat\pi)
    =
    \sum_{k=0}^{K-1}
    g_k(S_k,\widehat\pi_k).
\]
The expectation is evaluated out of sample, using independently
simulated trajectories of the exogenous process.

For a lower bound, we employ an information relaxation that allows
the controller to anticipate future realizations of the exogenous
process. The resulting information advantage is penalized by
martingale increments constructed from the approximate value
functions $\widehat V_k$ obtained by regression.

For a current state $(x,z)$ and action $a\in\{0,1\}$, let
\[
    z'=a(z+\Delta t_k)
\]
denote the next clock state. We define
\[
    \overline V_k(x,z')
    :=
    \mathbb E\left[
        \widehat V_{k+1}(X_{k+1},z')
        \mid X_k=x
    \right]
\]
and construct the martingale increment
\[
    \Delta M_{k+1}(a)
    =
    \widehat V_{k+1}(X_{k+1},z')
    -
    \overline V_k(X_k,z'),
    \qquad
    z'=a(\zeta_k+\Delta t_k).
\]
For every nonanticipative admissible policy, these increments have
conditional mean zero. Notice that the one-stage cost does not enter
the martingale increment: since $g_k(S_k,a)$ is already measurable
at time $t_k$, it would cancel under conditional centering.

Consequently, relaxing nonanticipativity and subtracting the
martingale increments from the pathwise costs gives the dual lower
bound
\[
    L
    :=
    \mathbb E\left[
        \inf_{\pi\in\Pi^{\mathrm{PI}}}
        \left\{
            J^\Delta(\pi)
            -
            \sum_{k=0}^{K-1}\Delta M_{k+1}(\pi_k)
        \right\}
    \right]
    \leq
    V_0^\Delta,
\]
where $\Pi^{\mathrm{PI}}$ denotes the set of policies with perfect
information about the future exogenous state trajectory. The state
dynamics, the deterministic evolution of the delay variable, and
the admissible action constraints remain unchanged; only the
nonanticipativity constraint is relaxed.

For each simulated exogenous trajectory $(X_0,\ldots,X_K)$,
the inner perfect-information problem is solved by backward
induction over the finite set of attainable clock states.
Setting $B_K(z)=0$, we compute
\[
\begin{aligned}
    B_k(z)
    =
    \min_{a\in\mathcal A_k(X_k,z)}
    \Bigl\{
        &g_k(X_k,z,a)
        +\overline V_k(X_k,z')\\
        &-\widehat V_{k+1}(X_{k+1},z')
        +B_{k+1}(z')
    \Bigr\},
    \qquad
    z'=a(z+\Delta t_k),
\end{aligned}
\]
where $\mathcal A_k(X_k,z)$ denotes the admissible actions at the
current state. The initial value $B_0(z_0)$ equals the pathwise
minimum appearing in the dual formulation, so that
\[
    L=\mathbb E[B_0(z_0)].
\]
Thus, the inner optimization is a finite backward recursion and
does not introduce an additional regression approximation.

In the numerical implementation, the conditional expectation
$\overline V_k$ is evaluated using tensor-product Gauss--Hermite
quadrature under the exact Gaussian transition law of the
Ornstein--Uhlenbeck process. In particular, writing
\[
    X_{k+1}\mid X_k=x
    \sim\mathcal N(\mu_k(x),\Sigma_k),
\]
the approximation takes the form
\[
    \overline V_k^{\mathrm{GH}}(x,z')
    =
    \sum_{j=1}^{N_{\mathrm{GH}}}
    w_j\,
    \widehat V_{k+1}
    \bigl(\mu_k(x)+A_k\xi_j,z'\bigr),
\]
where $A_kA_k^\top=\Sigma_k$ and
$(\xi_j,w_j)$ are the appropriately normalized
multidimensional Gauss--Hermite nodes and weights.
The approximate value function is evaluated as the minimum
of the two fitted action-value functions.

Since this minimum is generally not polynomial, Gauss--Hermite
quadrature is not guaranteed to evaluate the conditional
expectation exactly. Accordingly, the theoretical lower-bound
property holds for the exactly centred martingale increments,
whereas the computed dual estimate additionally contains
numerical quadrature error. We therefore interpret the
numerically obtained primal--dual gap as an a posteriori
diagnostic of approximation quality, rather than as a
rigorous error certificate in the absence of an explicit
quadrature-error bound.

To estimate the bracket, let
$X^{(1)},\ldots,X^{(N)}$ be independent out-of-sample
trajectories, and denote the corresponding primal policy costs
and pathwise dual values by $U^{(i)}$ and $L^{(i)}$.
We compute
\[
    \widehat U
    =
    \frac{1}{N}\sum_{i=1}^N U^{(i)},
    \qquad
    \widehat L
    =
    \frac{1}{N}\sum_{i=1}^N L^{(i)}.
\]
The estimated absolute and relative gaps are defined by
\[
    \widehat G
    =
    \widehat U-\widehat L,
    \qquad
    \widehat G_{\mathrm{rel}}
    =
    \frac{\widehat G}{|\widehat U|}.
\]
Since the primal and dual quantities are evaluated on the
same simulated trajectories, we estimate the standard error
of the gap from the paired observations
\[
    G^{(i)}=U^{(i)}-L^{(i)},
\]
namely
\[
    \widehat{\mathrm{SE}}(\widehat G)
    =
    \sqrt{
        \frac{1}{N(N-1)}
        \sum_{i=1}^N
        \left(G^{(i)}-\widehat G\right)^2
    }.
\]
The standard errors of $\widehat U$ and $\widehat L$ are
computed analogously from their respective pathwise
observations.

With exact conditional centering, information-relaxation
duality gives
\[
    L\leq V_0^\Delta\leq U.
\]
The reported numerical estimates approximate these bounds
subject to Monte Carlo sampling error and, for the dual
estimate, Gauss--Hermite quadrature error. A small estimated
primal--dual gap therefore provides evidence of a small
numerical optimality gap for the discrete-time problem, but
does not by itself measure the time-discretization error or
out-of-model policy performance.

\subsection{Synthetic Forecast Experiments}

Finally, we investigate the sensitivity of the methodology to the quality of the available forecast information. To this end, we replace operational forecasts with synthetic forecasts constructed from window-averaged realized weather observations. Since these synthetic forecasts are derived retrospectively from observed data, they may incorporate information that would not have been available at the corresponding decision times. Consequently, this experiment should be interpreted as a retrospective sensitivity analysis rather than an evaluation of a practically deployable forecasting system.

\begin{table}[ht]
\centering
\begin{tabular}{c|c|c}
 & Operational Forecast & Synthetic Forecast \\
\hline
Yearly Average Cost/Day (Always On) & 480 USD & 480 USD \\
\hline
Yearly Average Cost/Day (SOC Policy) & 478 USD & 464 USD \\
\hline
Cost Savings & $\sim 0.41\%$ & $\sim 3.33\%$ \\
\hline
\end{tabular}
\caption{Annual average daily operating costs under operational and retrospectively constructed synthetic forecasts.}
\label{tab:forecast_costs}
\end{table}

As shown in \Cref{tab:forecast_costs}, the estimated savings increase from approximately $0.41\%$ under operational forecasts to $3.33\%$ under retrospectively constructed synthetic forecasts. This indicates that the performance of the proposed methodology is sensitive to the forecast information supplied to the controller and suggests that improvements in forecast quality may offer additional opportunities for operational cost reduction. However, the experiment does not isolate forecast uncertainty from other sources of approximation error, including regression, time discretization, receding-horizon optimization, and environmental-model inaccuracies. Accordingly, these results should not be interpreted as establishing that forecast error is the dominant limitation of the proposed methodology.

\section{Conclusion}

We developed a stochastic optimal control framework for air-conditioning systems 
with delayed activation.  
The resulting problem was formulated in weak open-loop form, with 
optimization taken over admissible stochastic bases as well as controls.

We introduced a regression-based dynamic programming 
approximation on nested grids and proved convergence of the 
idealized discretized value functions to the continuous-time value function.

On the computational side, the method combines a forecast-tracking Ornstein--Uhlenbeck model, monthly maximum-likelihood calibration, regression-based dynamic programming, and rolling-horizon deployment. The numerical experiments show that the learned feedback controls exhibit anticipatory switching behaviour: the system is activated before the comfort region is violated in order to compensate for the warm-up delay.

The real-data Berlin experiments demonstrate systematic cost reductions relative to a permanently active climate-control system, while maintaining conservative behaviour in the presence of large violation penalties. The synthetic forecast experiments indicate that substantially larger savings are possible when forecast quality improves. This suggests that, in the present implementation, the practical performance is limited more by forecast accuracy than by the control methodology itself.

\section*{Acknowledgments}

C.B. gratefully acknowledges the support of the German research council DFG through the CRC TRR 388.

\printbibliography

@article{kallenberg_sde,
 ISSN = {00911798, 2168894X},
 URL = {http://www.jstor.org/stable/2244838},
 author = {Olav Kallenberg},
 journal = {The Annals of Probability},
 number = {1},
 pages = {196--205},
 publisher = {Institute of Mathematical Statistics},
 title = {On the Existence of Universal Functional Solutions to Classical SDE'S},
 urldate = {2026-08-16},
 volume = {24},
 year = {1996}
}

@article{benth_temperature_2005,
  author  = {Fred Espen Benth and Jurate \v{S}altyt\.{e}-Benth},
  title   = {Stochastic Modeling of Temperature Variations with a View Towards Weather Derivatives},
  journal = {Applied Mathematical Finance},
  year    = {2005},
  volume  = {12},
  number  = {1},
  pages   = {53--85},
  doi     = {10.1080/1350486042000271638}
}

@book{kushner_dupuis_2001,
  author    = {Harold J. Kushner and Paul G. Dupuis},
  title     = {Numerical Methods for Stochastic Control Problems in Continuous Time},
  publisher = {Springer},
  address   = {New York},
  edition   = {2},
  year      = {2001},
  series    = {Applications of Mathematics},
  volume    = {24},
  doi       = {10.1007/978-1-4613-0007-6},
  url       = {https://doi.org/10.1007/978-1-4613-0007-6}
}

@article{ludkovski_regression_impulse,
     author = {Mike Ludkovski},
     title = {Regression {Monte} {Carlo} for {Impulse} {Control}},
     journal = {MathematicS In Action},
     pages = {73--90},
     year = {2022},
     publisher = {Soci\'et\'e de Math\'ematiques Appliqu\'ees et Industrielles},
     volume = {11},
     number = {1},
     doi = {10.5802/msia.18},
     language = {en},
     url = {https://msia.centre-mersenne.org/articles/10.5802/msia.18/}
}

@article{palczewski_stettner_delay,
author = {Palczewski, Jan and Stettner, \L{}ukasz},
title = {Finite Horizon Optimal Stopping of Time-Discontinuous Functionals with Applications to Impulse Control with Delay},
journal = {SIAM Journal on Control and Optimization},
volume = {48},
number = {8},
pages = {4874-4909},
year = {2010},
doi = {10.1137/080737848},

URL = { 
    
        https://doi.org/10.1137/080737848
    
    

},
eprint = { 
    
        https://doi.org/10.1137/080737848
    
    

}
}

@article{longstaff,
    author = {Longstaff, Francis A. and Schwartz, Eduardo S.},
    title = {Valuing American Options by Simulation: A Simple Least-Squares Approach},
    journal = {The Review of Financial Studies},
    volume = {14},
    number = {1},
    pages = {113-147},
    year = {2001},
    month = {01},
    issn = {0893-9454},
    doi = {10.1093/rfs/14.1.113},
    url = {https://doi.org/10.1093/rfs/14.1.113},
    eprint = {https://academic.oup.com/rfs/article-pdf/14/1/113/24432078/113.pdf},
}

@article{oksendal_sulem_delayed_reaction,
  author    = {Bernt {\O}ksendal and Agn{\`e}s Sulem},
  title     = {Optimal Stochastic Impulse Control with Delayed Reaction},
  journal   = {Applied Mathematics and Optimization},
  year      = {2008},
  volume    = {58},
  number    = {2},
  pages     = {243--255},
  doi       = {10.1007/s00245-007-9034-5},
  url       = {https://doi.org/10.1007/s00245-007-9034-5}
}

@article{kumar_hvac_mpc,
title = {Stochastic model predictive control for central HVAC plants},
journal = {Journal of Process Control},
volume = {90},
pages = {1-17},
year = {2020},
issn = {0959-1524},
doi = {https://doi.org/10.1016/j.jprocont.2020.03.015},
url = {https://www.sciencedirect.com/science/article/pii/S0959152420301943},
author = {Ranjeet Kumar and Michael J. Wenzel and Mohammad N. ElBsat and Michael J. Risbeck and Kirk H. Drees and Victor M. Zavala},
}

@misc{yang_hvac_soc,
      title={Stochastic Optimal Control of HVAC system for Energy-efficient Buildings}, 
      author={Yu Yang and Guoqiang Hu and Costas J. Spanos},
      year={2021},
      eprint={1911.00840},
      archivePrefix={arXiv},
      primaryClass={eess.SY},
      url={https://arxiv.org/abs/1911.00840}, 
}

@article{zeng_hvac_review,
AUTHOR = {Zeng, Xinran and Li, Chunhui and Li, Xiaoying and Mao, Chennan and Li, Zhengwei and Li, Zhenhai},
TITLE = {Energy Efficiency Optimization of Air Conditioning Systems Towards Low-Carbon Cleanrooms: Review and Future Perspectives},
JOURNAL = {Energies},
VOLUME = {18},
YEAR = {2025},
NUMBER = {13},
ARTICLE-NUMBER = {3538},
URL = {https://www.mdpi.com/1996-1073/18/13/3538},
ISSN = {1996-1073},
DOI = {10.3390/en18133538}
}

@article{yin_cleanroom_energy,
title = {Influence of thermal energy storage and heat pump parametrization for demand-side-management in a nearly-zero-energy-building using model predictive control},
journal = {Energy and Buildings},
volume = {226},
pages = {110364},
year = {2020},
issn = {0378-7788},
doi = {https://doi.org/10.1016/j.enbuild.2020.110364},
url = {https://www.sciencedirect.com/science/article/pii/S0378778820313086},
author = {Steffen Bechtel and Sasan Rafii-Tabrizi and Frank Scholzen and Jean-Régis Hadji-Minaglou and Stefan Maas},
}

@article{ni_semiconductor_hvac,
title = "Optimizing HVAC systems for semiconductor fabrication: A data-intensive framework for energy efficiency and sustainability",
author = "Ni, \{Hsiao Ping\} and Chong, \{Wai Oswald\} and Chou, \{Jui Sheng\}",
note = "Publisher Copyright: {\textcopyright} 2024 Elsevier Ltd",
year = "2024",
month = jul,
day = "15",
doi = "10.1016/j.jobe.2024.109397",
language = "English (US)",
volume = "89",
journal = "Journal of Building Engineering",
issn = "2352-7102",
publisher = "Elsevier B.V.",
}

@article{perez_lombard_building_energy,
title = {A review on buildings energy consumption information},
journal = {Energy and Buildings},
volume = {40},
number = {3},
pages = {394-398},
year = {2008},
issn = {0378-7788},
doi = {https://doi.org/10.1016/j.enbuild.2007.03.007},
url = {https://www.sciencedirect.com/science/article/pii/S0378778807001016},
author = {Luis Pérez-Lombard and José Ortiz and Christine Pout},
}

@book{bertsekas,
  title={Stochastic Optimal Control: The Discrete-Time Case},
  author={Bertsekas, D. and Shreve, S.E.},
  isbn={9781886529038},
  lccn={96080191},
  series={Athena scientific optimization and computation series},
  url={https://books.google.com.sa/books?id=sRHEAAAACAAJ},
  year={1996},
  publisher={Athena Scientific}
}

@article{renzo,
  author    = {Caballero, Renzo and Kebaier, Ahmed and Scavino, Marco and Tempone, Ra{\'u}l},
  title     = {Quantifying uncertainty with a derivative tracking SDE model and application to wind power forecast data},
  journal   = {Statistics and Computing},
  year      = {2021},
  volume    = {31},
  number    = {5},
  pages     = {64},
  doi       = {10.1007/s11222-021-10040-8},
  publisher = {Springer}
}

@online{openweather,
  author  = {{OpenWeather}},
  title   = {OpenWeather Weather API},
  year    = {2026},
  url     = {https://openweathermap.org/api},
  urldate = {2026-06-09}
}

@article{regression,
author = {Belomestny, Denis and Kolodko, Anastasia and Schoenmakers, John},
title = {Regression Methods for Stochastic Control Problems and Their Convergence Analysis},
journal = {SIAM Journal on Control and Optimization},
volume = {48},
number = {5},
pages = {3562-3588},
year = {2010},
doi = {10.1137/090752651}
}

@book{kallenberg,
  title={Foundations of Modern Probability},
  author={Kallenberg, O.},
  isbn={9780387953137},
  lccn={2001032816},
  series={Probability and Its Applications},
  url={https://books.google.com.sa/books?id=L6fhXh13OyMC},
  year={2002},
  publisher={Springer New York}
}

@article{howard,
  title={Dynamic programming and markov processes.},
  author={Howard, Ronald A},
  year={1960},
  publisher={John Wiley}
}

\appendix

\section{Proof of \Cref{feedback}}\label{app:proofprop}
Let \(\alpha\) be a given control. Without loss of generality, we may assume
that, on each interval \([t_k,t_{k+1}]\), \(X\) is given by the measurable
SDE solution map from \cite{kallenberg_sde} used in the definition of
\(\eta^{t_k}\), since this representation agrees with the original state
process outside a set of probability zero.
Then we can define a policy
\[
\pi^\alpha
=
\bigl(
\mu^{\alpha,t_0},
\ldots,
\mu^{\alpha,t_{K-1}}
\bigr)
\]
through the disintegration
\[
\mathcal{L}
\bigl(
X_{t_k},
\zeta_{t_k},
\alpha_{t_k}
\bigr)
=
\mathcal{L}
\bigl(
X_{t_k},
\zeta_{t_k}
\bigr)
\otimes
\mu^{\alpha,t_k}.
\]
Let us first assume that
\begin{align}\label{eqval}
J(0,x_0,z_0;\alpha)
=
J^D_{\pi^\alpha}(x_0,z_0).
\end{align}
Then \Cref{feedback} would yield an optimal feedback policy
\(\pi^f\), where
\[
\mu^{f,t_k}_{x,z}
=
\delta_{u_{t_k}(x,z)},
\]
which gives an optimal feedback control through
\[
\alpha^f_{t_k}
=
u_{t_k}
\bigl(
X^f_{t_k},
\zeta^f_{t_k}
\bigr).
\]
Hence, it is enough to prove \Cref{eqval}. To this end, observe that for
\(\alpha\in\mathcal A^D\), the continuous-time cost can equivalently be
written as a discrete-time cost. Indeed, by the tower property and the
definition of \(g_{t_k}\),
\begin{align*}
J(0,x_0,z_0;\alpha)
&=
\sum_{k=0}^{K-1}
\mathbb E
\left[
g_{t_k}
\bigl(
X_{t_k},
\zeta_{t_k},
\alpha_{t_k}
\bigr)
\right].
\end{align*}
Here, on each interval \([t_k,t_{k+1}]\), the conditional expectation in
\(g_{t_k}\) integrates out the Brownian path on that interval. Thus, although
the original running cost depends on the continuous state trajectory, its
expectation over each control interval depends only on the joint distribution
of
\[
\bigl(
X_{t_k},
\zeta_{t_k},
\alpha_{t_k}
\bigr).
\]
Consequently, by linearity of expectation, the cost functional depends only on
these joint distributions at the individual control dates. Let \(q^{t_k}\) be the \(k\)-th marginal of
\[
r_K\bigl(\pi^\alpha,(x_0,z_0)\bigr),
\]
i.e.
\begin{align*}
q^{t_0}\bigl(\pi^\alpha,(x_0,z_0)\bigr)
&=
\delta_{x_0,z_0}\otimes\mu^{\alpha,t_0},
\\
q^{t_k}\bigl(\pi^\alpha,(x_0,z_0)\bigr)
&=
\bigl(
q^{t_{k-1}}\eta^{t_{k-1}}
\bigr)
\otimes
\mu^{\alpha,t_k}.
\end{align*}
We are done if we manage to show that
\begin{align}\label{induction}
\mathcal{L}
\bigl(
X_{t_k},
\zeta_{t_k},
\alpha_{t_k}
\bigr)
=
q^{t_k}.
\end{align}
We proceed by induction. The case \(k=0\) follows by definition, as
\[
\mathcal{L}
\bigl(
X_{t_0},
\zeta_{t_0},
\alpha_{t_0}
\bigr)
=
\mathcal{L}
\bigl(
X_{t_0},
\zeta_{t_0}
\bigr)
\otimes
\mu^{\alpha,t_0}
=
\delta_{(x_0,z_0)}
\otimes
\mu^{\alpha,t_0}.
\]
Assuming \Cref{induction} for \(k-1\), we obtain the second equality by the
equation of state of \((X,\zeta)\) and the third by the induction hypothesis:
\begin{align*}
\mathcal{L}
\bigl(
X_{t_k},
\zeta_{t_k},
\alpha_{t_k}
\bigr)
&=
\mathcal{L}
\bigl(
X_{t_k},
\zeta_{t_k}
\bigr)
\otimes
\mu^{\alpha,t_k}
\\
&=
\left(
\mathcal{L}
\bigl(
X_{t_{k-1}},
\zeta_{t_{k-1}},
\alpha_{t_{k-1}}
\bigr)
\eta^{t_{k-1}}
\right)
\otimes
\mu^{\alpha,t_k}
\\
&=
\bigl(
q^{t_{k-1}}\eta^{t_{k-1}}
\bigr)
\otimes
\mu^{\alpha,t_k}
\\
&=
q^{t_k}.
\end{align*}

\end{document}